\author{Jack Edward Tisdell}
\title{Solving infinitary Rubik's cubes}
\pgfplotsset{compat=newest}
\newcommand\Z{\mathbb Z}
\newcommand\Q{\mathcal Q}
\newcommand\D{\mathcal D}
\newcommand\U{\mathcal U}
\newcommand\m{\mathbf m}
\newcommand\uc{\mathbf{uc}}
\renewcommand\S{\mathfrak S}
\newcommand\Le{\mathfrak L}
\newcommand\LLL[1]{#1\times#1\times#1}
\newcommand\red{\mathtt r}
\newcommand\white{\mathtt w}
\newcommand\green{\mathtt g}
\newcommand\orange{\mathtt o}
\newcommand\yellow{\mathtt y}
\newcommand\blue{\mathtt b}
\newcommand\NaC{\varnothing}
\newcommand\solved{\text{\upshape solved}}
\newcommand\UC{\mathrm{UC}}
\newcommand\concat{{}^\frown}
\newcommand\rest\upharpoonright
\DeclarePairedDelimiter\abs\lvert\rvert
\DeclarePairedDelimiter\seq\langle\rangle
\DeclarePairedDelimiter\card\lvert\rvert
\DeclareMathOperator\lcm{lcm}
\DeclareMathOperator\id{id}
\newtheorem{theorem}{Theorem}
\newtheorem{corollary}[theorem]{Corollary}
\newtheorem{lemma}[theorem]{Lemma}
\newtheorem{observation}[theorem]{Observation}
\theoremstyle{remark}
\newtheorem*{remark}{Remark}
\begin{document}

\begin{abstract}
    We develop infinitary analogues of the $N\times N\times N$ Rubik's cube. We'll be pushed to consider the possibility of transfinitely many twists and the foremost question we shall study is whether or not all infinite scrambles are solvable, in principle, and in how many twists. As is typical of infinitary generalizations of everyday games and puzzles, several alternative definitions are reasonable, including in particular the \emph{edged} and \emph{edgeless} cubes, which bear surprising theoretical differences, not analogous to the finite case. We show that for the edged cube of cardinality $\aleph_\alpha$, all convergent (in a suitable sense) scrambles are in fact solvable in principle in fewer than $\omega_{\alpha+1}$ many moves. For the \emph{countable} edgeless variation, we prove by entirely different methods that all convergent scrambles are solvable in a mere $\omega^2$ many moves and this solution does not require knowledge of how the scrambled configuration was obtained. Finally, we explore the space of all legal configurations of the countable edgeless cube connected to the solved configuration by accessibility. We invite several open questions, including the solvability in principle of edgeless cubes of uncountable cardinality.
\end{abstract}

\maketitle

\section{Introduction}
\epigraph{Many people not normally interested in puzzles will recall some period of their lives when they have struggled with this opponent for days at a time.}{Winning Ways for Your Mathematical Plays, Volume 4 \cite{Berlekamp2004}}

The Rubik's cube is a puzzle unlike any other. It is unrivalled in its pop cultural impact, commercial success, and enduring popularity for 50 years, especially among puzzles so difficult. Ern\H{o} Rubik, a Hungarian architecture professor, invented the puzzle in 1974 and it was first released internationally in 1980. Since then, roughly 400 million cubes have been sold worldwide. Heralded as an icon of elegant design, it now belongs to the permanent collection of the Museum of Modern Art in New York. In popular culture, it is the quintessential difficult puzzle. One of my favorite Rubik's cube cameos is its appearance in the opening sequence of the 2008 Pixar film \textit{WALL-E}. (How do you communicate to the audience that your non-verbal, non-humanoid protagonist, alone on the abandoned Earth, is profoundly curious and foreshadow his problem solving aptitude? Have him add a Rubik's cube to his collection of mysterious scavenged artifacts of course!) 

Record ``speed cubers'' can solve the puzzle in seconds. But the meta-puzzles that have enchanted mathematicians and computer scientists have taken much longer. ``God's number'', the fewest moves necessary to solve any possible scramble, was proved to be exactly 20 only as recently as 2010 and the proof required an estimated 35 CPU-years of computing time. Dozens of variations of the Rubik's cube, collectively known as ``twisty puzzles'', are available. Thanks to the MagicTile and MagicCube4D software packages, you can try your hand at twisty puzzles on Klein bottle and the four-dimensional analogue of the Rubik's cube, if you dare. The most well-known variants, though, are the $\LLL4$ ``Rubik's Revenge'' and the $\LLL5$ ``Professor's Cube''. While fun puzzles, these higher-order cubes are not mathematically much more interesting than the classic $\LLL3$ and, consequently, not much mathematical work has been undertaken concerning these variations. One notable exception is the paper of Demaine~et~al. \cite{Demaine2011} wherein they show that God's number for the $\LLL N$ cube grows asymptotically as $\Theta(N^2/\log N)$.

We are of course interested in \emph{infinite} analogues of the $\LLL N$ Rubik's cube. In the spirit of fancifully designing and studying infinitary versions of familiar games and puzzles such as infinite chess, infinite draughts, infinite nim, infinite sudoku, infinite hex, etc., we develop infinitary analogues of the $N\times N\times N$ Rubik's cube. We'll be pushed to consider the possibility of transfinitely many twists and the foremost question we shall study is whether or not all infinite scrambles are solvable, in principle, and in how many twists. As is typical of infinitary generalizations of everyday games and puzzles, several alternative definitions are reasonable, and we will explore the theoretical differences arising from the alternative possibilities. 

The infinitary Rubik's cubes we introduce are infinite games only insofar as they generalize a familiar everyday puzzle, but the theory of infinite games has no bearing on our analysis. The infinitary Rubik's cube is much closer to infinite Wordle or Mastermind (see \cite{Hamkins2022}) than to, say, infinite chess. Nonetheless, we hope that fans of the genre will find plenty to enjoy here. 

The paper is structured as follows. 
\begin{itemize}
    \item In section \ref{sec:definitions}, we give the definitions. There are a lot, this could not be helped. We define the cube itself and its color configurations and more general labellings, the twists or moves of the cube, the algebraic structure formed by these twists, and its action on the configurations/labellings. This is a lot but we think the reader will find that these all extend the corresponding ordinary Rubik's cube notions very straightforwardly. After that, we venture into less familiar territory, explaining how transfinite sequences of twists act on configurations and define the extended algebraic structure as well as the various key convergence notions. 
        
    \item In section \ref{sec:observations}, we make several important observations and technical lemmas which apply to all the variations and all cardinalities of cube. These are perhaps a little dry but they are crucial to understanding the infinitary Rubik's cube.
    \item In section \ref{sec:edged_solvable}, we give the first main result, that every convergent scramble of the edged cube of any infinite cardinality is solvable in principle. This section is short, as most of the work is done in the preceding section. We argue by showing that the underlying algebraic structure is in fact a group. This method does establish solvability (which is our primary goal), but it does not give anything resembling an algorithm for solving a given scrambled configuration and only yields the weakest possible upper bound on the longest necessary solution, namely, $<\omega_{\alpha+1}$ many moves for the edged cube of cardinality $\aleph_\alpha$. 
    \item In section \ref{sec:edgeless_algorithm}, we consider the edgeless cube variation, in particular of countable cardinality $\aleph_0$. The main result is the development of an explicit algorithm (adapting that of \cite{Demaine2011} for $\LLL N$ cubes) which solves any given accessible configuration in at most $\omega^2$ many moves, a huge improvement over the (countable) edged case in many regards. Pushing this algorithm a little further, we are able to map out the space of configurations connected to the solved configuration by one- or two-way accessibility. 
    \item Finally, in section \ref{sec:questions}, we enumerate several of the pressing open questions. 
    \item As sort of addendum, the bonus section \ref{sec:coding_orders} includes a curious result about coding well-orders into cube configurations.
\end{itemize}

\section{Preliminary definitions}
\label{sec:definitions}
The motivating---if hopelessly vague---infinitary generalizations of the ordinary $\LLL N$ Rubik's cube are the countable ``$\LLL\Z$'' and ``$\LLL{(\mathbb Q\cap[-1,1])}$'' and the continuum-sized ``$\LLL{\mathbb R}$'' and ``$\LLL{[-1,1]}$'' cubes, in which every point in each direction corresponds to a layer one might imagine twisting. Thinking about these vague suggestions reveals almost immediately which key features of ordinary Rubik's cubes will need to be incorporated in a precise way into our definitions in order to make sense of them. 

The first such feature of finite Rubik's cubes is their \emph{parity}. Namely, on $\LLL N$ cubes for \emph{odd} $N$, there is a center layer in each direction. For \emph{even} $N$, there is a central cut between two layers in each direction. The position of the center (whether it be a layer or a cut) is crucial in understanding Rubik's cubes abstractly. We will call an infinite cube odd or even depending on whether it has a distinguished center layer or a distinguished center cut. The analysis of even cubes turns out to be essentially the same as that of odd cubes, ignoring any special mention of the center layers, so the even/odd distinction is not so important. (But the existence of a distinguished center certainly is!)

Anyone who has toyed with a Rubik's cube is familiar with the special role played by the edge and corner ``cubies'', those components which have tiles from more than one face of the cube. Intuitively, whatever the ``$\LLL\Z$'' and ``$\LLL{[-1,1]}$'' cubes are supposed to be, it seems that the later \emph{should} have edge and corner cubies while the former \emph{should not}. The possibility of an Rubik's-like cube with no edge or corner cubies might seem, to a purist, like too great a departure from the traditional Rubik's cube. In our opinion, the appeal of the ``$\LLL\Z$'' cube as a natural infinitary analogue (that happens to be edgeless) is simply too great to discount. Moreover, we feel the essence of the puzzle is the manner in which twist moves act on the configuration space, rather than in particular features of the structure of the traditional cube. A stronger objection to the idea of an edgeless cube is that the analogous finite situation is uninteresting. Namely, one can view an edgeless $\LLL N$ cube simply as a ordinary $\LLL{(N+2)}$ cube with the edge and corner stickers removed, as it were. So all algorithms which apply to the ordinary cube can be applied to the finite edgeless cube. As we shall see, this turns out not to be true in the infinite case. The existence of edge and corner cubies puts severe restrictions on what can happen and, surprisingly, the analysis of the two cases is quite different.

We define first the odd, edgeless case. Given a set $L$ (for us, always assumed to be infinite), let $-L$ be a copy $L$. If $r$ is an element of $L$, we'll write $-r$ for its copy in $-L$ and we regard $-r$ and $r$ as distinct objects (so that $L$ and $-L$ are disjoint but isomorphic). Let $0$ denote a distinguished object not an element of $L$. As a notational convenience, we'll write $-(-r) = r$ for all $r \in L$ and $-0 = 0$ so that $-$ is an isomorphism between $\{0\}\cup L$ and $-L\cup\{0\}$. Set $L^\dagger = -L\cup \{0\} \cup L$. One should think of the structure $\Q_L$ we are now defining as the ``$\LLL{L^\dagger}$ Rubik's cube'', a edgeless odd cube with center layer $0$ in each direction.

For convenience, we also define the partial order $<$ on $L^\dagger$ by
\[
    -r < 0 < r
\]
for all $r \in L$. Elements of $L$ are incomparable as are elements of $-L$, as we'll see, it simply doesn't matter whether these have a non-trivial order, despite geometric intuitions.\footnote{Thanks to Joel David Hamkins for pointing this out in a spirited MathOverflow discussion.} Later, in order to describe particular configuration or algorithm, we may take the liberty to endow $L$ with an order but this is purely a matter of convenience and not part of the intrinsic structure.

Introduce two new elements $\pm\infty$, thought of as bookending $L^\dagger$ in the sense that $-\infty < -r < 0 < r < +\infty$ and write $\bar L^\dagger = [-\infty,+\infty] = L^\dagger \cup \{\pm\infty\}$. The reflection $-$ extends naturally to $\pm\infty$, namely, $-(+\infty) = -\infty$ and $-(-\infty) = +\infty$. We define the cube $\Q_L$ as (the disjoint union of) six copies of $L^\dagger \times L^\dagger$ (its faces) embedded in the $\pm\infty$ planes of $\U = \LLL{\bar L^\dagger}$ in the obvious way. Thinking of $\U$ as the ambient space in which $\Q_L$ lives, we can define $x,y,z$ coordinates (taking values in $\bar L^\dagger$) on $\U$ and we can refer to, say, corresponding points in opposite faces and planes in which we might make twists. Moreover, in $\U$, we can define the \underline Right, \underline Up, and \underline Front faces of $\Q_L$ as the faces in $+\infty$ planes in the $x$, $y$, and $z$ directions, respectively. Naturally, the \underline Left, \underline Down, and \underline Back faces are the respective opposite faces. We refer to locations $(\alpha,\beta,\gamma)$ in $\Q_L$ as \emph{cells}. That is, a cell is a point $(\alpha,\beta,\gamma) \in \U$ where exactly one of $\alpha,\beta,\gamma$ is $\pm \infty$. We emphasize as a point of clarity and simplification that the definitions do not require $L$ to have any structure beyond its set structure, so long as $L$, $-L$, and $0$ are distinguished.

A \emph{quarter-turn twist} $T_{i,\alpha} : \Q_L \to \Q_L$ for $i \in \{x,y,z\}$ and $\alpha \in \bar L^\dagger$ is a permutation of cells which rotates all cells in the $i = \alpha$ plane by a quarter turn in a designated direction and fixes all other cells. We imagine $+x,+y,+z$ as the rightward, upward, and frontward (that is, facing us, as this page) directions, respectively, and define twists according to the right-hand rule. So, for example, $T_{x,\alpha}(\alpha,\beta,+\infty) = (\alpha,-\infty,\beta)$ and $T_{x,\alpha}(\alpha,-\infty,\gamma) = (\alpha,-\gamma,-\infty)$. We say that twists $T_{i,\alpha}$ and $T_{j,\beta}$ are parallel just in case $i=j$. An important feature of finite $\LLL N$ Rubik's cubes is that a twist of the outermost layer non-trivially permutes not only the cells in the concerned layer but also all those in the adjacent face. We definitely want to allow such face twists and these are precisely the $T_{i,\pm\infty}$. For instance, $T_{x,+\infty}$ rotates the right face $\{x=+\infty\}$ through a quarter turn about $(y,z) = (0,0)$, i.e., $T_{x,+\infty}(+\infty,\alpha,\beta) = (+\infty,-\beta,\alpha)$. \textit{Nota bene}, since there are no edge or corner cubies, the face twists (non-trivially) permute \emph{only} the cells in the concerned face, quite different from face twists of ordinary finite Rubik's cubes, which necessarily involve edge and corner cubies. In fact, this is really what we mean when we say that $\Q_L$ has no edge or corner cubies.

There are some noteworthy ways in which our definitions diverge from traditional Rubik's cube notions. We mention these here in the interest of avoiding confusion among readers deeply familiar with traditional $\LLL N$ cubing. For one, we take twists as those operations which involve exactly one layer, whereas cubers often consider as single moves ``twists'' which involve all the layers between a given one and the nearest parallel face, at least in the standard notation. (We use the quotation marks only to distance this use of the word ``twist'' from our definition above, not pejoratively.) Aside from the notational conventions, this matters when one, say, counts the fewest number of moves necessary in a solve. Another possibly confusing diversion from traditional cubing is that we've defined the positive quarter-turn twists with respect to the orientation of the ambient space $\U$, whereas traditional cubers usually think of the positive quarter-turns as in the clockwise direction with respect to the nearest parallel face, so that, e.g., our Right face twist $T_{x,+\infty}$ is what the traditional $\LLL3$ cuber would think of as the \emph{reverse} Right face twist $\mathrm R'$. Our definition is, in this regard, a matter of mathematical convenience and we hope it will not cause consternation among cubers. (Although if one so desires, our definition of $\Q_L$ is perfectly well suited for the traditional quarter-turn conventions, since $L$ and $-L$ are distinguished from one another.)

A \emph{configuration} $f$ of the cube $\Q_L$ as an assignment to each cell a color in the gamut $\Gamma = \{\red,\white,\green,\orange,\yellow,\blue,\NaC\}$ (red, white, green, orange, yellow, blue, ``not a color''), that is, a map $f : \Q_L \to \Gamma$. We'll say that a configuration is \emph{legal} just in case it does not take the value $\NaC$. The value $\NaC$ is something like a $\mathtt{NaN}$ value in computing and its role will be made clear momentarily. 

The distinguished \emph{solved configuration} is the assignment
\[
    f_\solved(x,y,z) =
    \begin{cases*}
        \red & if $x = +\infty$,\\
        \blue & if $y = +\infty$,\\
        \white & if $z = +\infty$,\\
        \orange & if $x = -\infty$,\\
        \green & if $y = -\infty$,\\
        \yellow & if $z = -\infty$,\\
    \end{cases*}
\]
which assigns a distinct color to each face (in accordance with the standard Rubik's cube).

The finite sequences of quarter-turn twists form a monoid $\m_L^{<\omega}$ under concatenation generated by the individual twists. The identity element is the empty sequence $\varepsilon$ and all the parallel twists commute in $\m_L^{<\omega}$. This monoid acts on the configurations of $\Q_L$ in the obvious way, namely, given $\sigma \in \m_L^{<\omega}$ (some finite sequence of twists) and a configuration $f$, the configuration $\sigma f$ is the map $\sigma f(c) = f(\sigma^{-1}c)$ for every cell $c$. The \emph{same action} relation $\sigma \sim \tau$ whenever $\sigma f = \tau f$ for every configuration $f$ is an equivalence relation on $\m_L^{<\omega}$. The quotient $G_L = \m_L^{<\omega}/{\sim}$ is a group analogous to the Rubik's cube group. It's obvious from the definition of $\sim$ that $[\sigma]f = \sigma f$ is a well-defined action of $G_L$ on the configuration space of $\Q_L$ where $[\sigma]$ is the $\sim$ equivalence class of $\sigma$. It similarly easy to see that concatenation is a congruence with respect to $\sim$, and so induces an associative binary operation on $G_L$. To hammer home the point, the reason this quotient is a group is the following. Given any $\sigma \in \m_L^{<\omega}$, we may write $\sigma = T_k\cdots T_1$ as a composition of quarter-turn twists. Obviously, $T^3T \sim TT^3 \sim \varepsilon$ for every quarter-turn twist $T$, and inductively $(T_1^3\cdots T_k^3)(T_k\cdots T_1) \sim (T_k\cdots T_1)(T_1^3\cdots T_k^3)\sim \varepsilon$. Thus, $[T_1^3\cdots T_k^3] = [\sigma]^{-1}$ in $G_L$. In other words, the inverses in $G_L$ are (represented by) the inverse sequences in the Rubik's cube sense. Note the order reversal involved.

For the $\LLL N$ cube, the distinction between this monoid and this group is not usually made explicitly. Our primary interest in what follows essentially concerns the study of a certain natural extension of this monoid given by the suitably defined convergent (transfinite) sequences of twists and its action on the configuration space of $\Q_L$. We will be able to extend the relation $\sim$ as well but it is not at all clear whether or not the quotient forms a group, as we'll see, the representatives we produced above for the inverses of finite sequences do not generally have analogues for infinite sequences. 

It's clear even on cursory consideration that if $\Q_L$ is to be a novel generalization of the $\LLL N$ cube, we will want to allow infinitely many twists so long as we still obtain a well-defined configuration thereafter. Of course, if we can do this, there is no principled reason to stop---just keep applying twists to the limiting configuration---so the process is inherently transfinite. Let's make this precise. Given a $\theta$-sequence $\seq{\sigma_\eta : \eta < \theta}$ of elements of $G_L$ for some ordinal $\theta$ and an initial configuration $f_0$, we define the subsequent configurations by $f_{\eta + 1} = \sigma_\eta f_\eta$ and for limit stages $\lambda$, we set $f_\lambda(c) = \lim_{\eta \nearrow \lambda} f_\eta(c)$ if this limit exists and $f_\lambda(c) = \NaC$ otherwise. In other words, at successor stages, we act on the configuration by the next element of the sequence and at limit stages $\lambda$, a cell has color $\gamma$ if and only if its color had eventually stabilized on $\gamma$ before stage $\lambda$ and otherwise it has color $\NaC$ (which is a way of saying it has no color). Note that when $\theta$ is a limit ordinal, this definition yields the final configuration $f_\theta$ (so the sequence of configurations always has a terminal element, regardless of the order type of the sequence of twists).

There are a few different convergence notions we wish to distinguish. Consider the sequence $\seq{\sigma_\eta : \eta < \theta}$ applied to an initial configuration $f_0$. We'll say the resulting \emph{sequence of configurations $\seq{f_\eta: \eta \le \theta}$ is convergent} just in case $f_\theta$ is legal and divergent otherwise. In this case, we'll say that \emph{the sequence of twists $\seq{\sigma_\eta: \eta < \theta}$ is convergent over $f_0$}. Crucially, these convergence notions depend on the initial configuration as well as the sequence of twists. Of course, there is nothing special about functions $f : \Q_L \to \Gamma$ which take color values. Twist sequences act in exactly the same way on the space of functions $f : \Q_L \to X\cup\{\NaC\}$ for any set $X$ which does not contain $\NaC$. We refer to such arbitrary-valued functions on $\Q_L$ as \emph{labellings} (so that, in particular, configurations are $\Gamma$-valued labellings). Notice that if a sequence of twists is convergent over some labelling $f : \Q_L \to X\cup\{\NaC\}$ and $g : X \to Y$ is any function with $\NaC \not\in Y$, then the sequence is convergent over the labeling $g\circ f$. This is essentially because the composition $g\circ f$ is no more discriminating among cell values than $f$. We say that a sequence of twists is \emph{universally convergent} if it is convergent over the identity labeling $\id_{\Q_L} : \Q_L \to \Q_L\cup\{\NaC\}$. Universal convergence is a property of the twist sequence alone. The sense in which this convergence is universal is that, by the composition property, a universally convergent sequence is convergent over every labeling. The intended understanding of universally convergent sequences is as follows. The identity labeling is simply a canonical labeling that assigns a unique value to each cell. So the universally convergent sequences are those which track (through limit stages) not merely the color of each cell, but the original position of the tiles in each cell.

Notice that, \textit{prima facie}, in exploring what's possible with infinite sequences of twists, it does not suffice to consider only a generating set of $G_L$, in particular the quarter-turn twists $T_{i,\alpha}$. For example, for any quarter-turn twist $T$, the $\omega$-sequence $T,T,T,\dots$ generally does not yield a legal limiting configuration whereas the sequence $T^4,T^4,T^4,\dots$ does (trivially since $T^4$ is the identity). But if one insists on thinking of $T^4$ as four applications of $T$, then it would seem that these are the same sequence of twists, obviously an unsatisfactory situation. At the heart of the matter is the fact that this limit definition (which we argue is perfectly natural) does not respect the group action. Namely, although $(\tau\sigma)f$ and $\tau(\sigma f)$ are the same configuration, it is important to keep track of whether one has applied $\sigma$ then $\tau$ or simply applied $\tau\sigma$ in determining the limiting configuration. Thus, we must be mindful of the entire history of configurations and, moreover, we should be clear on which compositions of quarter-turn twists ought to count as single operations. 

We refer to quarter-turns $T_{i,\alpha}$, half-turns $T_{i,\alpha}^2$, and reverse quarter-turns $T_{i,\alpha}^3 = T_{i,\alpha}^{-1}$ of a single layer as \emph{basic twists}. Obviously, the basic twists generate $\m_L^{<\omega}$ since the quarter-turns alone do. It seems reasonable to allow all and only the basic twists as individual operations. Certainly we want the quarter turns to count as single moves and there seems no reason to then disallow the reverse quarter turns. The status of half turns as single operations is perhaps more debatable but is certainly in the spirit of traditional cubing, for example, in the infamous result that God's number (the maximum number of moves necessary to solve the classic $\LLL3$ Rubik's cube) is 20, half turns count as single moves. By \emph{basic sequences}, we mean sequences of basic twists of arbitrary ordinal length. The basic sequences form a monoid $\m_L$ under concatenation extending $\m_L^{<\omega}$ and acting on the configurations of $\Q_L$ as described above. 

Recall, the motivation for the definition of the equivalence $\sim$ on $\m_L^{<\omega}$ is that sequences should be equivalent just in case they yield the same final configuration for any given initial configuration, and this definition works just the same for $\m_L$. Namely, if $\vec\sigma = \seq{\sigma_\eta : \eta < \theta}$ and $\vec\tau = \seq{\tau_\eta : \eta < \zeta}$, then we'll say $\vec \sigma \sim \vec \tau$ just in case $\vec \sigma f = \vec \tau f$ for every configuration $f$ (where by $\vec \sigma f$, we mean the terminal configuration in the sequence obtained by applying $\vec \sigma$ to the initial configuration $f$). Like the finite case, it follows immediately from the definition of $\sim$ that that that concatenation of (transfinite) sequences is a congruence with respect to $\sim$, and so quotient $\m_L/{\sim}$ is a monoid. But now, the representatives for inverses have no analogues, since the reverse of a infinite well-order is not a well-order.

When considering the action of explicit sequences, we will compactly summarize the situation using the pipe symbol $|$ in the following way. $\dots \sigma_2|\sigma_1|\sigma_0|f$ for $\sigma_k \in G_L$ denotes the sequence of configurations $f, \sigma_0f, \sigma_1\sigma_0f,\dots$. We will also use the notation $\seq{\sigma_\eta:\eta < \theta}|f$ to mean the same thing, where possibly $\theta > \omega$. So for instance, $T_{i,\alpha}^2|f$ and $T_{i,\alpha}|T_{i,\alpha}|f$ are distinct sequences of configurations. The former is applies single half-turn twist of the $i=\alpha$ layer while the later applies two quarter turns of that same layer in succession. Both end in the same configuration $T_{i,\alpha}^2f$ but the later passes through the intermediate quarter-turn. Abusing notation slightly, we may write $f' = s|f$ for some sequence $s$ to mean that $f'$ is the terminal configuration of the sequence $s|f$. 

A basic sequence is said to be \emph{twist-finite} if each basic twist appears in it only finitely many times. Since there are infinitely many layers, a twist-finite sequence certainly need not be finite. A simple example is the sequence which applies one quarter-turn twist in every $x$ layer (in any order), amounting to a global rotation.

Finally, we'll say a configuration $f$ is \emph{accessible} from $f_0$ if there exists a basic sequence $\seq{\sigma_\eta : \eta < \theta}$ such that $f$ is the terminal configuration of $\seq{\sigma_\eta : \eta<\theta}|f_0$. If we don't specify $f_0$, it is understood to be the solved configuration $f_\solved$. A $\theta$-\emph{scramble} is a basic sequence of length $\theta$ starting at $f_\solved$, that is, the entire sequence of configurations. 

We define the \emph{edged cube} $\bar \Q_L$ as follows. 
Like before, $\bar\Q_L$ which we think of as the ``$\LLL{\bar L^\dagger}$ Rubik's cube'' consists of six copies of $\bar L^\dagger \times \bar L^\dagger$ embedded in the extreme planes of $\U = \LLL{\bar L^\dagger}$. The key difference is that now we have edge and corner cubies, which carry more than one color, so there is not a one-to-one correspondence between cells (to which we assign colors) and locations in $\U$. So we augment the extreme locations in $\U$ (i.e., those having at least one $\pm\infty$ component) with a indication of the face to which the cell belongs. We define the edged cube $\bar\Q_L$ as the set of these augmented triples. In the case that there is only one $\pm\infty$ component, this is unambiguous, otherwise, we'll denote a cell by a triple $(\alpha,\beta,\gamma)$ with one component underlined indicating the face to which the cell belongs. For example, $(-\infty,\beta,\underline{+\infty})$ is the front cell of a cubie in the Front Left edge and $(+\infty,\underline{+\infty},+\infty)$ is the Up cell of the Up Right Front corner cubie. Configurations, labellings, and twists are all defined as before with $\bar\Q_L$ playing the role of $\Q_L$ and now face twists $T_{i,\pm\infty}$ act as before on the cells in the concerned face but also on the shared corner and edge cubies. So $T_{x,+\infty}(\underline{+\infty},\beta,\gamma) = (\underline{+\infty}, -\gamma, \beta)$ and $T_{x,+\infty}(+\infty,\beta,\underline{+\infty}) = (+\infty,\underline{-\infty},\beta)$ and so on.

Lastly, we can also define the \emph{even} edgeless and edged cubes analogously but with $L^\ddagger = -L \cup L$ replacing $L^\dagger = -L\cup\{0\}\cup L$. Like an $\LLL N$ cube for $N$ even, the infinite even cubes have no center cells in any row or column of each face but the center cut between $-L$ and $L$ is distinguished. All the arguments we give for the odd edged and edgeless cubes apply to the even edged and edgeless cubes, respectively, simply ignoring mention of center cells, except where otherwise stated.

It will be useful to have terminology for the various types of cells. Cells having having two equal or opposite coordinates (e.g., $(\pm\alpha,\pm\alpha,\underline{+\infty})$) are called \emph{diagonal} cells. Cells (in the edged cube) having exactly two $\pm\infty$ coordinates are called \emph{edge} cells and cells having three $\pm\infty$ coordinates are \emph{corner} cells. Cells (in the odd cube) having at least one $0$ coordinate are called \emph{cross} cells and the six cells with two $0$ coordinates are the \emph{center} cells.

\section{General observations and lemmas}
\label{sec:observations}
Like an $\LLL N$ cube, the label of any given cell can be moved by basic sequences to at most 24 different cells. That is, for any given cell $c$, there are at most 24 cells with label $c$ after applying a basic sequence to the identity labeling. We will call this set of cells the \emph{cluster} of cell $c$ and denote it by $C(c)$. In fact, each cluster has exactly 24 cells except the center cluster, which contains all and the only the six center cells. Every non-center cluster $C$ has a unique representative $(x,y,\underline{+\infty})$ with $x > 0$ and $y \ge 0$ in the upper right quadrant of the Front face (including the cross cells $(x,0,\underline{+\infty})$ but excluding $(0,y,\underline{+\infty})$). We'll sometimes denote this cluster by $C(x,y)$ and the center cluster by $C(0,0)$. Each non-center cross cluster $C(x,0)$ contains the cells with face coordinates $(0,\pm x)$ and $(\pm x,0)$. Each diagonal cluster $C(\alpha,\alpha)$ contains all and only the cells with face coordinates $(\pm \alpha, \pm \alpha)$. All and only the edge cross cells (if they exist) belong to the cluster $C(+\infty,0)$. All and only the corner cells belong to the corner cluster $C(+\infty,+\infty)$. Every other edge cell belongs to a cluster distinct from that of its conjoined parter on the adjacent face. For instance, the cell $(x,+\infty,\underline{+\infty}) \in C(x,+\infty)$ for $0 < x < +\infty$ while $(x,\underline{+\infty},+\infty) \in C(+\infty,x)$. Every basic twist acts on each cluster by a permutation which may be thought of as an element of $S_{24}$, the symmetric group of order $24!$.

With this in mind, we make the following observation.

\begin{observation}
    Illegality is persistent in the sense that no legal configuration is accessible from an illegal one (in any ordinal number of moves).
\end{observation}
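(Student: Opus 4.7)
The plan is to prove persistence of illegality by tracking, through the entire sequence of configurations, a single ``bad'' cell which continues to carry the value $\NaC$. Formally, assume $f_0$ is illegal and pick a cell $c_0$ with $f_0(c_0) = \NaC$. Letting $C = C(c_0)$ be its cluster, I will prove by transfinite induction on $\eta \le \theta$ that there is some $c_\eta \in C$ with $f_\eta(c_\eta) = \NaC$. Applying this at $\eta = \theta$ gives that $f_\theta$ is illegal.

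For the successor step, given $c_\eta \in C$ with $f_\eta(c_\eta) = \NaC$, I would take $c_{\eta+1} = \sigma_\eta(c_\eta)$, viewing $\sigma_\eta$ as the induced permutation of cells. Since clusters are orbits under the group action, $c_{\eta+1}$ remains in $C$, and by the definition of the action on configurations, $f_{\eta+1}(c_{\eta+1}) = (\sigma_\eta f_\eta)(\sigma_\eta(c_\eta)) = f_\eta(\sigma_\eta^{-1}\sigma_\eta(c_\eta)) = f_\eta(c_\eta) = \NaC$.

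The only delicate step is the limit stage $\lambda$, and here the finiteness of clusters (at most $24$ cells, as explained in the preceding paragraph) is precisely what makes the argument go through. By the pigeonhole principle, some specific cell $c^\ast \in C$ equals $c_\eta$ for cofinally many $\eta < \lambda$, and at every such $\eta$ we have $f_\eta(c^\ast) = \NaC$. Consequently the sequence $\eta \mapsto f_\eta(c^\ast)$ cannot eventually stabilize to a value different from $\NaC$; whether it stabilizes to $\NaC$ or fails to stabilize at all, the limit definition yields $f_\lambda(c^\ast) = \NaC$. Setting $c_\lambda = c^\ast$ closes the induction.

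The main (and really only) obstacle is this limit step, where finiteness of the cluster is genuinely needed: without such a bound, the $\NaC$ mark could conceivably drift through infinitely many distinct cells without ever revisiting any single cell cofinally often, and then pigeonhole would not pin it down. That $\abs{C} \le 24$, independent of the cardinality of $L$ and applicable uniformly to the edged and edgeless cases, is exactly the feature that forces illegality to survive every limit.
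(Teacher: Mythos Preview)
Your proof is correct and follows essentially the same approach as the paper: track a $\NaC$-valued cell within the fixed finite cluster $C(c_0)$, push it forward by $\sigma_\eta$ at successor stages, and at limit stages use the pigeonhole principle on the $\le 24$ cells of $C$ to find one that is $\NaC$ cofinally often, hence $\NaC$ in the limit. The paper's write-up is slightly terser but the argument is identical in substance.
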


\begin{proof}
    Suppose $f_0$ is an illegal configuration, i.e., $f_0(c_0) = \NaC$ for some cell $c_0$. Let $\seq{\sigma_\eta : \eta < \theta}$ be a basic sequence and suppose that every configuration $f_\eta$ in the sequence $\seq{\sigma_\eta : \eta < \theta}|f_0$, except possibly $f_\theta$, has a colorless cell $c_\eta$ in the cluster of $c_0$, that is, $f_\eta(c_\eta) = \NaC$ for some $c_\eta \in C(c_0)$ for every $\eta < \theta$. If $\theta = \xi + 1$ is a successor ordinal, then $f_\theta(\sigma_\xi c_\xi) = f_{\xi+1}(\sigma_\xi c_\xi) = \sigma_\xi f_\xi(\sigma_\xi c_\xi) = f_\xi(c_\xi) = \NaC$ so $\sigma_\xi c_\xi \in C(c_0)$ is colorless in $f_\theta$. On the other hand, if $\theta$ is a limit ordinal, then since $C(c_0)$ is finite and has at least one colorless cells for each $f_\eta$, some cell $c \in C(c_0)$ must have color $\NaC$ unboundedly often, i.e., there is subsequence $\eta_\zeta$ unbounded in $\theta$ such that $f_{\eta_\zeta}(c) = \NaC$. Then, $f_\theta(c) = \NaC$ (either because it stabilized on $\NaC$ or never stabilized).
\end{proof}

This is a nice robustness result which dispels some worries one might have had about the set-up. For one, we are justified in interpreting illegal configuration as simply ``ill-defined'' with respect to coloring (or more general labelling). Moreover, for a sequence $s$ of twists, we defined the sequence $s|f_0$ of configurations as convergent just in case the terminal configuration is legal. The previous observation ensures us that such a sequence could not have passed through an intermediary illegal configuration, that is, every extension of a divergent sequence is divergent. 

Next, let's see that universally convergent sequences act on the identity labelling bijectively on every cluster.

\begin{observation}
    If $s = \seq{\sigma_\eta : \eta < \theta}$ is a universally convergent basic sequence and $f$ is the final configuration obtained by applying $s$ to the identity labelling $\id_{\Q_L}$, then $f : \Q_L \to \Q_L$ is a bijection. Moreover, $f(C) = C$ for each cell cluster $C$. The same is true of $\bar\Q_L$.
    \label{obs:uc_bijective}
\end{observation}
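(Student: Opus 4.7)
The plan is a transfinite induction on $\eta \le \theta$, maintaining the invariant that $f_\eta \rest C : C \to C$ is a bijection for every cluster $C$, where $f_\eta$ is the labelling after stage $\eta$ of applying $s$ to $\id_{\Q_L}$. The key input is the remark made just before the statement: every basic twist acts on every cluster as a permutation of that cluster (equivalently, clusters are precisely the orbits of cells under the action of $G_L$, so any basic twist maps a cluster into itself).

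The base case is trivial, as $f_0 = \id_{\Q_L}$ restricts to the identity on each $C$. For the successor step, the definition of the action of a twist on a labelling gives $f_{\eta+1}(c) = f_\eta(\sigma_\eta^{-1} c)$, hence
\[
    f_{\eta+1} \rest C \;=\; (f_\eta \rest C) \circ (\sigma_\eta^{-1} \rest C).
\]
By the inductive hypothesis the first factor is a bijection $C \to C$, and by the cluster-preservation remark so is the second; their composite is therefore also a bijection $C \to C$.

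The limit step is the only substantive one and is where universal convergence is used. Fix a limit ordinal $\lambda \le \theta$ and a cluster $C$. Universal convergence means $f_\lambda(c) \neq \NaC$ for every cell $c$, so each cell's label stabilizes strictly before $\lambda$: for every $c \in C$ there is some $\eta_c < \lambda$ with $f_\mu(c) = f_\lambda(c)$ for all $\mu \in [\eta_c,\lambda)$. Because $C$ has at most $24$ elements, $\eta^* = \max_{c \in C} \eta_c$ exists and is still strictly less than $\lambda$. Then $f_\lambda \rest C = f_{\eta^*} \rest C$, which by the inductive hypothesis is a bijection $C \to C$. This is the heart of the proof and really the only place where we need to do anything: the finiteness of clusters is what allows the finitely many stabilization stages to be dominated inside $\lambda$.

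Setting $\eta = \theta$ yields $f \rest C : C \to C$ bijective, so in particular $f(C) = C$, for every cluster $C$. Since $\Q_L$ is the disjoint union of its clusters, $f : \Q_L \to \Q_L$ is itself a bijection. The argument transfers verbatim to $\bar\Q_L$, since basic twists of the edged cube likewise permute each cluster; no new idea is required. The main (and essentially only) obstacle was isolating the limit step and observing that finiteness of clusters together with universal convergence forces simultaneous stabilization before $\lambda$.
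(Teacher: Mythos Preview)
Your proof is correct and follows essentially the same transfinite-induction approach as the paper, using finiteness of clusters at the limit step. Your limit step is in fact a mild simplification: rather than arguing injectivity and surjectivity of $f_\lambda$ separately as the paper does, you exploit finiteness of $C$ to find a single stage $\eta^* < \lambda$ by which every cell of $C$ has stabilized, so that $f_\lambda \rest C = f_{\eta^*} \rest C$ outright.
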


\begin{proof}
    We proceed by induction on $\theta$. Let $\seq{f_\eta : \eta \le \theta}$ be the configurations obtained by applying $s = \seq{\sigma_\eta : \eta < \theta}$ to $\id_{\Q_L}$. $f_0 = \id_{\Q_L}$ is clearly a bijection which fixes every cluster. Assume that $f_\eta : \Q_L \to \Q_L$ is a bijection fixing every cell cluster for each $\eta < \theta$. If $\theta = \zeta + 1$ is a successor ordinal, then $f_\theta = \sigma_\zeta f_\zeta = f_\zeta \circ \sigma_\zeta^{-1}$ is obviously a bijection. Suppose $\theta$ is a limit ordinal. Since $s$ is universally convergent, $f_\theta$ is legal. Suppose then that $f_\theta(c) = f_\theta(c') = d \in \Q_L$ for some cells $c,c',d \in \Q_L$. By definition, the sequences $f_\eta(c)$ and $f_\eta(c')$ for $\eta < \theta$ much each eventually stabilize on $d$. So for some large enough $\zeta < \theta$, we have $f_\zeta(c) = f_\zeta(c')$, hence $c=c'$ by injectivity of $f_\zeta$. Thus, $f_\theta$ is injective. Let $e \in \Q_L$ be any cell, and $C$ its cluster. By assumption, for each $\eta < \theta$, there is some cell $c_\eta \in C$ with $f_\eta(c_\eta) = e$. Then by finiteness of $C$, there is a subsequence $\eta_\nu$, unbounded in $\theta$, on which $c_{\eta_\nu} = c$ is constant. Then $f_\theta(c)$ is either $e$ or $\NaC$, but the later is impossible by legality. Thus, $f_\theta$ is surjective. The same argument works for $\bar\Q_L$.
\end{proof}

This justifies our earlier understanding of universally convergent sequences as tracking the identity of each piece of the cube through limit stages. It shows also that the effect of a universally convergent sequence can be understood as an infinitary product $\prod_C \pi_C$ of permutations $\pi_C$ of each cluster $C$. (This is coherent since the clusters are disjoint.) Each $\pi_C$ may be viewed as an element of $S_{24}$, the symmetric group of order $24!$. In fact, this leads to the following equivalence result.

\begin{lemma}
    The universally convergent sequences are exactly the twist-finite sequences
    \label{lemma:UC=TF}
\end{lemma}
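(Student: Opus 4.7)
The plan is to prove the two implications separately, leveraging the structural fact that every cell $c$ of $\Q_L$---and likewise of $\bar\Q_L$---belongs to at most three layers (its $x$-, $y$-, and $z$-coordinate layers, with face twists serving when a coordinate is $\pm\infty$). Consequently at most nine basic twists can ever move $c$. For the easy direction, assume $s = \seq{\sigma_\eta : \eta < \theta}$ is twist-finite and let $\seq{f_\eta : \eta \le \theta}$ be the labelings obtained from $\id_{\Q_L}$. Twist-finiteness forces each of the at most nine basic twists moving $c$ to occur finitely often in $s$, so the set of stages at which $\sigma_\eta$ moves $c$ is a finite union of finite sets, hence finite; at all other stages $f_{\eta+1}(c) = f_\eta(c)$. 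Hence $(f_\eta(c))_{\eta < \lambda}$ is eventually constant for every limit $\lambda \le \theta$, the limit is well-defined, and $f_\theta$ is legal.

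For the converse, I argue by contrapositive. Suppose a basic twist $T$ occurs at an unbounded sequence of stages $\xi_0 < \xi_1 < \ldots$ below $\theta$, and fix a cell $c$ with $T^{-1}(c) \ne c$ (such $c$ exists since $T$ is a nontrivial permutation of the cube). The crucial claim is that each intermediate labeling $f_{\xi_k}$ is a bijection. This follows by applying the illegality-persistence observation together with Observation~\ref{obs:uc_bijective} to the prefix $\seq{\sigma_\eta : \eta < \xi_k}$: legality of $f_\theta$ forces legality of $f_{\xi_k}$, whence the prefix is universally convergent and $f_{\xi_k}$ is bijective. Injectivity then yields $f_{\xi_k+1}(c) = f_{\xi_k}(T^{-1}c) \ne f_{\xi_k}(c)$, so the label at $c$ genuinely changes at every $\xi_k$. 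Setting $\lambda = \sup_k \xi_k$, the sequence $(f_\eta(c))_{\eta < \lambda}$ is not eventually constant, so $f_\lambda(c) = \NaC$, and illegality persistence propagates this to $f_\theta$, contradicting universal convergence of $s$.

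The main obstacle is precisely the bijectivity of the intermediate labelings $f_{\xi_k}$: without it, accidental coincidences $f_{\xi_k}(c) = f_{\xi_k}(T^{-1}c)$ could in principle allow the infinitely many occurrences of $T$ to pass without changing the label at $c$, and the contradiction would not go through. The combined force of the two general observations established earlier in this section resolves this worry cleanly---illegality persistence to pass to prefixes, Observation~\ref{obs:uc_bijective} to conclude bijectivity---and the same argument applies verbatim to $\bar\Q_L$ and to the even variants.
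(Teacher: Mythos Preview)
Your proof is correct and shares the same overall architecture as the paper's: the forward direction is identical (each cell is touched by at most nine basic twists, so twist-finite implies eventually constant at every cell), and for the converse you both fix a repeated basic twist $T$, a cell $c$ moved by $T$, and pass to a limit ordinal $\lambda$ at which the repetitions are cofinal. The difference lies in how the contradiction is extracted. You invoke Observation~\ref{obs:uc_bijective} on each prefix to get injectivity of the intermediate labellings $f_{\xi_k}$, concluding $f_{\xi_k+1}(c)=f_{\xi_k}(T^{-1}c)\ne f_{\xi_k}(c)$, so the label at $c$ genuinely changes cofinally and $f_\lambda(c)=\NaC$. The paper instead assumes the limit $f_\lambda$ is legal and computes along the cofinal subsequence that $f_\lambda(Tc)=\lim_\nu f_{\eta_\nu+1}(Tc)=\lim_\nu f_{\eta_\nu}(c)=f_\lambda(c)$, contradicting injectivity of $f_\lambda$ itself. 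Your route makes the non-stabilization explicit and calls on bijectivity at every intermediate stage; the paper's uses bijectivity only once, at the terminal stage, at the cost of a short limit computation. Both rely on the same two earlier observations and are equally valid.
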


\begin{proof}
    Each cell is affected by only nine basic twists, the three basic twists in each slice containing it and the three basic twists of its face. It follows immediately that every twist-finite sequence is universally convergent, since each cell is affected only finitely many times by any such sequence. 

    The converse is more subtle. Suppose $\seq{\sigma_\eta : \eta < \theta}$ is a universally convergent sequence but is not twist finite. I.e., there is some basic twist $T$ appearing infinitely many times in the sequence, say along a subsequence $T = \sigma_{\eta_\nu}$. Without loss of generality, we may assume that $\theta$ is a limit ordinal and the $\eta_\nu$ are unbounded in $\theta$, otherwise just truncate the sequence at a limit of the $\eta_\nu$ (e.g., after the first $\omega$ many). Let $c$ be any cell which is affected non-trivially by $T$. As usual, let $\seq{f_\eta : \eta \le \theta}$ be the sequence of labellings obtained by applying $\seq{\sigma_\eta: \eta<\theta}$ to the identity labelling. They are all legal since the twist sequence is universally convergent. By legality of $f_\theta$, we know $\lim_\eta f_\eta(c) = f_\theta(c)$ exists the and same is true of $f_\theta(Tc) = \lim_\eta f_\eta(Tc)$. But then $f_\theta(Tc) = \lim_\eta f_\eta(Tc) = \lim_\nu f_{\eta_\nu + 1}(Tc) = \lim_\nu \sigma_{\eta_\nu}f_{\eta_\nu}(Tc) = \lim_\nu Tf_{\eta_\nu}(Tc) = \lim_\nu f_{\eta_\nu}(c) = f_\theta(c)$. We chose $c$ so that $Tc \ne c$, thus violating the injectivity of $f_\theta$. So there can be no such twist $T$.
\end{proof}

With this in mind, we can consider the algebraic structure of the universally convergent sequences. First, the following is immediate from the previous lemma.

\begin{corollary}
    The universally convergent basic sequences form a submonoid $\uc_L$ of $\m_L$.%
    \label{cor:uc_monoid}
\end{corollary}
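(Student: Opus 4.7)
The plan is to reduce the claim to Lemma~\ref{lemma:UC=TF} and then verify the submonoid conditions for twist-finite sequences, which is essentially bookkeeping. Since Lemma~\ref{lemma:UC=TF} identifies the universally convergent sequences with the twist-finite ones, it suffices to show that the twist-finite basic sequences form a submonoid of $\m_L$.

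To that end, I would first observe that the identity of $\m_L$, namely the empty sequence $\varepsilon$, is vacuously twist-finite: every basic twist appears in it zero times. Next, for closure under concatenation, suppose $\vec\sigma = \seq{\sigma_\eta : \eta < \theta_1}$ and $\vec\tau = \seq{\tau_\eta : \eta < \theta_2}$ are twist-finite basic sequences, and let $T$ be any basic twist. The number of times $T$ appears in $\vec\sigma \concat \vec\tau$ is the sum of the number of times it appears in $\vec\sigma$ and in $\vec\tau$, both of which are finite by assumption, hence finite. So $\vec\sigma \concat \vec\tau$ is twist-finite. This establishes closure and completes the argument.

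There is no real obstacle here; the substance of the corollary is entirely carried by Lemma~\ref{lemma:UC=TF}. The only mild subtlety is conceptual: directly from the definition of universal convergence via limits of labellings, it is not obvious that concatenating two universally convergent sequences yields another universally convergent one, because one must argue that the limits behave well when passing through the ``splice point''. Routing through the twist-finiteness characterization bypasses this entirely, which is the whole point of stating the corollary immediately after Lemma~\ref{lemma:UC=TF}.
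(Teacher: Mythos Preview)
Your proposal is correct and follows exactly the paper's approach: the paper's proof is the single sentence ``The concatenation of twist-finite sequences is twist-finite,'' and you have simply spelled out this observation (plus the trivial check that $\varepsilon$ is twist-finite) in a bit more detail.
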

\begin{proof}
    The concatenation of twist-finite sequences is twist-finite.
\end{proof}

Next, we establish the following equivalent definition of $\sim$ for universally convergent basic sequences.

\begin{lemma}
    Let $\vec\sigma,\vec\tau \in \uc_L$ be universally convergent basic sequences. Then $\vec\sigma \sim\vec\tau$ if and only if $\vec\sigma\id = \vec \tau \id$. That is, they are equivalent just in case they result in the same labelling when applied to the identity labelling. Moreover, if either holds, then in fact, $\vec\sigma f = \vec\tau f$ for every labelling $f$. (Recall, the definition only refers to $\Gamma$-valued configurations, not arbitrary labellings.)
    \label{lemma:equiv_id}
\end{lemma}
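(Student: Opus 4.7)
The plan is to establish a ``change-of-variables'' formula
\[
    \vec\sigma f \;=\; f \circ (\vec\sigma\,\id_{\Q_L})
\]
for every $\vec\sigma \in \uc_L$ and every labelling $f$ (not only $\Gamma$-valued configurations), from which the whole lemma, including the ``moreover'' clause, will fall out in one line. The intuition is that the universally convergent sequence already ``solves'' where every cell goes via its action on $\id_{\Q_L}$, so its action on any other labelling is just a pullback by that bijection.

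I would prove this formula by transfinite induction on the length $\theta$ of $\vec\sigma$. The $\theta=0$ case is trivial and the successor case is a direct computation from the single-twist identity $(Tf)(c) = f(T^{-1}c) = f((T\,\id_{\Q_L})(c))$ combined with the inductive hypothesis. The real content is the limit case. Writing $g_\eta = (\vec\sigma\rest\eta)\,\id_{\Q_L}$ and $f_\eta = (\vec\sigma\rest\eta)f$, the inductive hypothesis gives $f_\eta = f \circ g_\eta$ for all $\eta < \theta$. Because $\vec\sigma$ is universally convergent and the paper's limit notion at limit stages is nothing but eventual constancy in the discrete set $\Q_L$, for each cell $c$ there is some $\eta_c < \theta$ with $g_\eta(c) = g_\theta(c)$ for every $\eta \ge \eta_c$. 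Therefore $f_\eta(c) = f(g_\eta(c))$ is itself eventually constant at $f(g_\theta(c))$, giving $f_\theta(c) = f(g_\theta(c))$ as required. This limit step is the main obstacle, and the crucial ingredient is the discreteness of $\Q_L$: universal convergence of $g_\eta$ automatically transfers to convergence of any composed sequence $f \circ g_\eta$, even though $f$ is arbitrary and carries no structure of its own.

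Granting the formula, everything else is immediate. For ($\Leftarrow$) together with the ``moreover'' clause: if $\vec\sigma\,\id_{\Q_L} = \vec\tau\,\id_{\Q_L}$, then for any labelling $f$, $\vec\sigma f = f \circ \vec\sigma\,\id_{\Q_L} = f \circ \vec\tau\,\id_{\Q_L} = \vec\tau f$. For the contrapositive of ($\Rightarrow$), suppose some cell $c$ has $c_1 := \vec\sigma\,\id_{\Q_L}(c) \ne \vec\tau\,\id_{\Q_L}(c) =: c_2$; by Observation \ref{obs:uc_bijective} the cells $c_1$ and $c_2$ both lie in $C(c)$, but all that matters is that they are distinct. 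I would then pick any legal configuration $f$ with $f(c_1) = \red$ and $f(c_2) = \white$, coloring the remaining cells arbitrarily (say all $\red$). The formula then gives $\vec\sigma f(c) = f(c_1) = \red \ne \white = f(c_2) = \vec\tau f(c)$, witnessing $\vec\sigma \not\sim \vec\tau$.
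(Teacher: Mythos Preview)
Your proof is correct, and both you and the paper ultimately establish the same change-of-variables identity $\vec\sigma f = f \circ (\vec\sigma\,\id_{\Q_L})$ before drawing the two directions from it. The route to that identity is genuinely different, however. The paper invokes Lemma~\ref{lemma:UC=TF} to conclude that $\vec\sigma$ is twist-finite, so for each cell $c$ only a finite subsequence $\sigma_{\eta_0},\dots,\sigma_{\eta_k}$ acts nontrivially on $c$, and one simply reads off $\vec\sigma f(c) = f(\sigma_{\eta_k}^{-1}\cdots\sigma_{\eta_0}^{-1}c) = f((\vec\sigma\,\id)(c))$. Your argument instead proceeds by transfinite induction on the length of $\vec\sigma$, using only universal convergence and the discreteness of cell values: at limit stages, eventual constancy of $g_\eta(c)$ forces eventual constancy of $f(g_\eta(c))$, with no appeal to twist-finiteness. (One small point worth making explicit: the inductive hypothesis applies to the initial segments $\vec\sigma\rest\eta$ because those are themselves universally convergent, which follows from the persistence-of-illegality observation.) Your approach is thus logically independent of Lemma~\ref{lemma:UC=TF}, which is a modest gain in modularity; the paper's approach is a bit more concrete and hands-on once twist-finiteness is available. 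The treatment of the two directions after the formula is essentially identical in both proofs.
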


\begin{proof}
    First suppose that $\vec\sigma\id = \vec\tau\id$. Let $f$ be any labelling (including any configuration). We want to show that $\vec\sigma f = \vec \tau f$. Consider any cell $c \in \Q_L$. Since $\vec\sigma$ is twist finite, $\vec \sigma$ affects $c$ non-trivially only on a finite subsequence, say $\seq{\sigma_{\eta_0},\dots,\sigma_{\eta_k}}$. Thus,
    \begin{align*}
        \vec\sigma f(c)
        &= (\seq{\sigma_{\eta_0},\dots,\sigma_{\eta_k}} f)(c)
        \\&= f( \sigma_{\eta_k}^{-1}\cdots \sigma_{\eta_0}^{-1} c)
        \\&= f( \id(\sigma_{\eta_k}^{-1}\cdots \sigma_{\eta_0}^{-1} c))
        \\&= f( (\seq{\sigma_{\eta_0},\dots,\sigma_{\eta_k}} \id)(c) )
        \\&= f( (\vec\sigma\id)(c) ).
    \end{align*}
    (More colloquially, this says that the label/color under $\vec\sigma f$ of the cell $c$ is just the label/color under $f$ of the cell $(\vec\sigma \id)(c)$, i.e., the cell whose label is at $c$ after acting on the identity labelling by $\vec\sigma$, which is exactly how we expect universally convergent sequences to behave.) Likewise, $\vec\tau f(c) = f( (\vec\tau\id)(c) )$. But by assumption, $\vec\sigma\id = \vec\tau\id$, hence $\vec\sigma f = \vec\tau f$, as desired.

    Conversely, suppose that $\vec\sigma \id \ne \vec\tau\id$ for $\vec\sigma,\vec\tau \in \uc_L$. We will find a configuration $f : \Q_L \to \Gamma$ such that $\vec\sigma f \ne \vec\tau f$. By assumption, there is some cell $c \in \Q_L$ where $(\vec\sigma\id)(c) \ne (\vec\tau\id)(c)$. By the same reasoning as above, $\vec\sigma f(c) = f( (\vec \sigma \id)(c) )$ and $\vec\tau f = f( (\vec\tau\id)(c) )$. Thus, any configuration $f$ which assigns distinct colors to $(\vec\sigma\id)(c)$ and $(\vec\tau\id)(c)$ does the trick.
\end{proof}

\begin{remark}
    The above lemma does not hold in general for arbitrary sequences in $\m_L$. More specifically, one can easily construct two (necessarily not universally convergent) sequences $\vec\sigma,\vec\tau \in \m_L$ and a configuration $f$ such that $\vec\sigma\id = \vec\tau\id$ but $\vec\sigma f \ne \vec\tau f$. 
\end{remark}

\begin{lemma}
    $\uc_L$ is closed under $\sim$. That is, if $\vec\sigma$ is a universally convergent basic sequence and $\vec\sigma \sim \vec\tau$ for some basic sequence $\vec\tau$, then $\vec\tau$ is universally convergent.
    \label{lemma:un_closed_eq}
\end{lemma}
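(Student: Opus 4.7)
The plan is to argue by contradiction via Lemma~\ref{lemma:UC=TF}, showing that $\vec\tau$ must be twist-finite. Suppose not: some basic twist $T$ occurs in $\vec\tau = \seq{\tau_\eta : \eta < \theta}$ at $\omega$-many stages $\nu_0 < \nu_1 < \cdots$. Setting $\mu = \sup_k \nu_k$, the initial segment $\vec\tau\rest\mu$ has limit length with $T$ cofinal in it. Because $\vec\sigma$ is universally convergent, $\vec\sigma f$ is legal for every $\Gamma$-valued $f$, and $\vec\sigma \sim \vec\tau$ then gives $\vec\tau f$ legal; by persistence of illegality (Observation~1), $(\vec\tau\rest\eta) f$ is legal for every $\eta \le \theta$, in particular at $\eta = \mu$.

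The first real step is to adapt the limit calculation of Lemma~\ref{lemma:UC=TF} to show the following identity: for every $\Gamma$-valued $f$ and every cell $c$ moved non-trivially by $T$, $((\vec\tau\rest\mu)f)(Tc) = ((\vec\tau\rest\mu)f)(c)$. Indeed, legality supplies some $\eta_0 < \mu$ past which both $(\vec\tau\rest\eta)f(c)$ and $(\vec\tau\rest\eta)f(Tc)$ are constant. Picking $\nu_k > \eta_0$ with $\tau_{\nu_k} = T$, the successor step yields $((\vec\tau\rest(\nu_k+1))f)(Tc) = ((\vec\tau\rest\nu_k)f)(T^{-1}Tc) = ((\vec\tau\rest\mu)f)(c)$, while simultaneously equaling $((\vec\tau\rest\mu)f)(Tc)$ by stabilization.

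To exploit this identity I would specialize to the two-valued configurations $f^{c_0}$ defined by $f^{c_0}(c_0) = \red$ and $f^{c_0}(d) = \blue$ for $d \neq c_0$. By Lemma~\ref{lemma:equiv_id}, $\vec\tau f^{c_0} = \vec\sigma f^{c_0}$ has exactly one $\red$ cell, namely at $(\vec\sigma\id)^{-1}(c_0)$. Successor twists preserve the $\red$-cell count (being bijections), while at limit stages the count can only decrease (a cell is $\red$ in the limit only if it was eventually constantly $\red$), and once the count reaches zero it stays zero. So the terminal count of $1$ forces exactly one $\red$ cell at every intermediate stage; write $\pi_\mu(c_0)$ for the unique $\red$ cell of $(\vec\tau\rest\mu)f^{c_0}$. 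Applying the identity with $c = \pi_\mu(c_0)$ forces $T\pi_\mu(c_0) = \pi_\mu(c_0)$: otherwise $T\pi_\mu(c_0)$ would also be $\red$, violating uniqueness. Hence $\pi_\mu(c_0)$ is $T$-fixed, for every cell $c_0$.

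The finishing step is to observe that $\pi_\mu$ is a bijection of $\Q_L$ (and the argument transfers verbatim to $\bar\Q_L$): basic sequences preserve clusters, so $\pi_\mu(C(c_0)) \subseteq C(c_0)$; and if $\pi_\mu(c_0) = \pi_\mu(c_0')$ then $(\vec\tau\rest\mu)f^{c_0} = (\vec\tau\rest\mu)f^{c_0'}$, so applying the common tail $\seq{\tau_\eta : \mu \le \eta < \theta}$ gives $\vec\sigma f^{c_0} = \vec\sigma f^{c_0'}$, forcing $c_0 = c_0'$ by bijectivity of $\vec\sigma\id$. An injection of a finite cluster into itself is a bijection, so $\pi_\mu$ is surjective on each cluster and thus on the whole cube. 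Consequently every cell is $T$-fixed---contradicting that the basic twist $T$ moves some cells. The main subtlety will be the bookkeeping of the $\red$-cell count across limit stages, but this is cleanly controlled by persistence combined with bijectivity at successors.
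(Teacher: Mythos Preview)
Your argument is correct, and it takes a genuinely different route from the paper's. The paper works directly with the definition of universal convergence: it applies $\vec\tau$ to the identity labelling, locates the first limit stage $\theta$ at which some cell $c$ acquires the value $\NaC$, and then uses finiteness of the cluster $C(c)$ to extract two specific labels $d,d'$ that alternate at $c$ along an unbounded subsequence. A two-colored configuration $g_0$ separating $d$ from $d'$ then witnesses $(\vec\tau g_0)(c) = \NaC$, contradicting $\vec\tau g_0 = \vec\sigma g_0$ legal. Your approach instead routes through the twist-finiteness characterization (Lemma~\ref{lemma:UC=TF}): you posit a basic twist $T$ occurring cofinally below some $\mu$, use the indicator configurations $f^{c_0}$ to define a position-tracking map $\pi_\mu$, and combine the $f(Tc)=f(c)$ identity with the exactly-one-$\red$-cell invariant to force every cell into the fixed-point set of $T$. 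The paper's argument is slightly more self-contained (it does not invoke Lemma~\ref{lemma:UC=TF}) and produces the contradicting configuration in one stroke; your argument is a bit longer but yields the pleasant byproduct that the indicator configurations $f^{c_0}$ let one recover a well-defined cell-tracking bijection $\pi_\mu$ at \emph{every} intermediate stage of any sequence $\sim$-equivalent to a universally convergent one, even before knowing that $\vec\tau$ is itself universally convergent. Both proofs ultimately hinge on the same finiteness-of-clusters pigeonhole, just deployed at different moments.
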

\begin{proof}
    Fix $\vec \sigma \in \uc_L$. Suppose $\vec\tau$ is not universally convergent but $\vec\tau \sim \vec \sigma$. Say $\vec\tau = \seq{\tau_\eta : \eta < \theta}$ and let $\seq{f_\eta : \eta \le \theta}$ be the sequence of labellings obtained by acting on $f_0 = \id$ by $\vec\tau$. By definition, $f_\theta(c) = \NaC$ for some cell $c$, say in cluster $C$. Without loss of generality, we may assume that $\theta$ is a limit ordinal and that every $f_\eta$ for $\eta < \theta$ is legal (otherwise, just consider the first stage where an illegal labelling appears). Thus, there is a subsequence $\seq{\eta_\nu}$, unbounded in $\theta$, such that $f_{\eta_\nu}(c) \ne f_{\eta_\nu+1}(c)$ for every $\nu$ (i.e., the label of $c$ changes unboundedly often before stage $\theta$). Moreover, every proper initial segment of $\vec \tau$ is universally convergent, hence each $f_\eta$ for $\eta < \theta$ restricted to $C$ is a permutation, by Observation~\ref{obs:uc_bijective}. Since $C$ is finite, by passing to a further subsequence (for which we use the same notation), we may assume $f_{\eta_\nu}(c)$ is constant in $\nu$, say taking value $d$. Then passing to a sub-subsequence, we may further assume that $f_{\eta_\nu+1}(c)$ is constant in $\nu$, say taking value $d'$, necessarily different from $d$. 

    Now take any configuration $g_0$ which assigns distinct colors to $d$ and $d'$. Since $\vec\sigma$ is universally convergent, we know $\vec\sigma g_0$ is a legal configuration but we claim that $\vec\tau g_0$ is not. Let $\seq{g_\eta : \eta \le \theta}$ be the sequence of configurations obtained by applying $\vec\tau$ to $g_0$. By similar reasoning as in the previous lemma, $g_\eta = g_0\circ f_\eta$ for every $\eta < \theta$. (Here we are using that fact that the proper initial segments of $\vec\tau$ are universally convergent.) But then $g_{\eta_\nu}(c) = g_0(f_{\eta_\nu}(c)) = g_0(d)$ and $g_{\eta_\nu+1}(c) = g_0(f_{\eta_\nu+1}(c)) = g_0(d')$ are different for every $\nu$. Hence $(\vec\tau g_0)(c) = g_\theta(c) = \NaC$. 
\end{proof}

Putting together the results of this section, we reveal the algebraic structure of the universally convergent basic sequences.

\begin{lemma} 
    The quotient $\UC_L = \uc_L/{\sim}$ forms a group extending $G_L$. Furthermore, every element of this group has order no greater than the least common multiple of the orders of the elements of $S_{24}$.
    \label{lemma:UC_group}
\end{lemma}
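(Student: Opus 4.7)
The plan is to leverage the preceding observations to reduce group-ness of $\UC_L$ to finiteness of the symmetric group $S_{24}$. By Corollary~\ref{cor:uc_monoid} and Lemma~\ref{lemma:un_closed_eq}, concatenation descends to a well-defined monoid operation on $\UC_L = \uc_L/{\sim}$ with identity $[\varepsilon]$. Since finite sequences are trivially twist-finite and the defining condition for $\sim$ is the same in the two contexts (namely, agreement on every configuration), the inclusion $\m_L^{<\omega} \hookrightarrow \uc_L$ descends to an injective monoid homomorphism $G_L \hookrightarrow \UC_L$; this will make $\UC_L$ a group extension of $G_L$ as soon as both are groups.

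For the inverses: let $N = \lcm\{\operatorname{ord}(\pi) : \pi \in S_{24}\}$, fix $\vec\sigma \in \uc_L$, and set $f = \vec\sigma\id$. By Observation~\ref{obs:uc_bijective}, $f$ is a cluster-preserving bijection of $\Q_L$, so it decomposes as a disjoint product $\prod_C \pi_C$ with each $\pi_C \in S_{24}$; by the choice of $N$, every $\pi_C^N$ is the identity, hence $f^N = \id$. The crux is now the identity
\[
    \vec\sigma^n\id = f^n \qquad (n < \omega),
\]
where $\vec\sigma^n$ denotes the $n$-fold self-concatenation, which lies in $\uc_L$ by Corollary~\ref{cor:uc_monoid}. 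This follows by induction on $n$ from the pointwise formula $(\vec\tau g)(c) = g((\vec\tau\id)(c))$, extracted from the proof of Lemma~\ref{lemma:equiv_id}, applied with $\vec\tau = \vec\sigma$ and $g$ ranging over $f^0, f^1, \dots, f^{n-1}$. Taking $n = N$ gives $\vec\sigma^N\id = \id$, whence $\vec\sigma^N \sim \varepsilon$ by Lemma~\ref{lemma:equiv_id}. Consequently $[\vec\sigma]^{N-1}$ is a two-sided inverse of $[\vec\sigma]$, and $\operatorname{ord}([\vec\sigma])$ divides $N$.

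The only real thing to watch out for is the order-of-application bookkeeping in the induction: one needs that the $n$-fold concatenation acts on $\id$ by the $n$-fold composition of $f$ in the intended order, not some transposed variant. Once the composition identity $\vec\tau g = g \circ (\vec\tau\id)$ is isolated from Lemma~\ref{lemma:equiv_id}, this verification is purely mechanical, and nothing deeper is required — in particular, no transfinite induction, because the power $N$ we need is finite.
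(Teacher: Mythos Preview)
Your proof is correct and follows essentially the same approach as the paper: pass to the cluster-wise permutation $f=\vec\sigma\id\in\prod_C S_{24}$ via Observation~\ref{obs:uc_bijective}, then use the finite exponent $N=\lcm\{\operatorname{ord}(\pi):\pi\in S_{24}\}$ to exhibit $[\vec\sigma]^{N-1}$ as a two-sided inverse. If anything, you are more careful than the paper in isolating and verifying the identity $\vec\sigma^n\id=f^n$ from the composition formula in Lemma~\ref{lemma:equiv_id}; the paper leaves that step implicit.
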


\begin{proof}
    By Corollary~\ref{cor:uc_monoid}, we know that concatenation is an associative binary operation on $\uc_L$ (with identity the empty sequence). So see that $\UC_L$ is a monoid, we must show that concatenation induces a binary operation on the quotient. Namely, let $[\vec\tau][\vec\sigma] = [\vec\tau\vec\sigma]$ where $\vec\tau\vec\sigma$ is $\vec\sigma$ followed by $\vec\tau$. We need that this definition does not depend on the representatives $\vec\sigma,\vec\tau$, i.e., that concatenation of universally convergent basic sequences is a congruence with respect to $\sim$. Let $\vec\sigma,\vec\tau \in \uc_L$ and say $\vec\sigma' \sim \vec\sigma$ and $\vec\tau'\sim\vec\tau$. (By Lemma~\ref{lemma:un_closed_eq}, we find that $\vec\sigma',\vec\tau' \in \uc_L$, even if we do not assume so.) By Lemma~\ref{lemma:equiv_id}, we find that $\vec\sigma'\id = \vec\sigma\id$. This is itself a labelling, hence applying the lemma again yields $\vec\tau'(\vec\sigma'\id) = \vec\tau(\vec\sigma\id)$, and thus $(\vec\tau'\vec\sigma')\id = (\vec\tau\vec\sigma)\id$. Finally, applying the lemma a third time (in the converse direction, as it were) gives $\vec\tau'\vec\sigma' \sim \vec\tau\vec\sigma$, as required.

    The more interesting part is the existence of inverses in $\UC_L$. Again, it's worth remarking that the argument we gave for finite basic sequences breaks down completely because the inverse in the ordinary Rubik's cube sense (i.e., the reverse sequence with each basic twist inverted) is not even well-ordered for infinite sequences! Nonetheless, we can find an inverse as follows. By Observation~\ref{obs:uc_bijective}, every universally convergent sequence acts on labellings by permuting each cluster, that is, as a product $\prod_C \pi_C$ over all clusters $C$ of permutations $\pi_C$ which may be regarded as elements of $S_{24}$. This infinitary product is coherent since the clusters are disjoint. Also by disjointness of clusters, $\big(\prod_C \pi_C\big)^k = \prod_C \pi_C^k$ for any finite $k$. So taking $k$ to be the least common multiple of the orders of the elements of $S_{24}$, we have that $\big(\prod_C \pi_C\big)^k = \prod_C \pi_C^k = \prod_C \id_C = \id$. Therefore, the $(k-1)$-fold concatenation of the original sequence is itself twist-finite, hence universally convergent, and represents a (two-sided) inverse of the original sequence.
\end{proof}

This is quite nice but from the perspective of the Rubik's cube puzzle, it's not really what we're after. Universal convergence is too strong. We only really care about convergence over the solved configuration, \emph{prima facie}, a weaker property. Surprisingly, the situation differs starkly between the edgeless and edged cases. 

\section{The edged cube of any infinite cardinality is solvable in principle}
\label{sec:edged_solvable}
In this section, we consider only the edged cube $\bar\Q_L$. We will characterize convergence over the solved configuration and prove that every accessible configuration is solvable, in principle. The key ingredient is the following lemma which concerns the uniqueness of coupled edge and corner cells.

\begin{lemma}
    Suppose $f$ is a legal configuration accessible from $f_\solved$. Then for each pair $\gamma,\gamma' \in \Gamma$ of (non-$\NaC$) colors and each (non-corner) edge cluster $C$, there is at most one pair of coupled edge cells $c,c'$ with $c \in C$ and $f(c) = \gamma$ and $f(c') = \gamma'$. Similarly, for each triple $\gamma,\gamma',\gamma'' \in \Gamma$, there is at most one triple of mutually adjacent corner cells $c,c',c''$ having those three colors under $f$.
    \label{lemma:color_coupling}
\end{lemma}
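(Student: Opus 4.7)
The plan is to prove the lemma by transfinite induction along a witnessing basic sequence $\seq{\sigma_\xi : \xi < \theta}$ taking $f_\solved$ to $f$. By the persistence of illegality together with the assumed legality of $f$, every intermediate configuration $f_\xi$ in $\seq{\sigma_\xi : \xi < \theta}|f_\solved$ is also legal. I will show by induction on $\xi \le \theta$ that each $f_\xi$ satisfies both the edge and corner uniqueness claims simultaneously.

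For the base case, I would verify the claim directly for $f_\solved$ by unwinding cluster geometry. Each edge cluster $C$ has exactly 24 cells, and the map sending $c \in C$ to the ordered pair $(\text{face of } c,\ \text{face of the cell coupled to } c)$ is a bijection from $C$ onto the 24 directed edges of the cube (by the transitivity of the cube's rotation group, which is realized inside the group generated by basic twists). Since the six faces carry six distinct colors under $f_\solved$, this translates into a bijection from $C$ onto 24 distinct ordered color pairs, exactly as required. The corner cluster admits an analogous bijection with ordered triples of pairwise adjacent faces.

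For the successor step, write $f_{\xi+1} = Tf_\xi$ for a basic twist $T$. If two coupled pairs $(c_1,c_1'), (c_2,c_2')$ with $c_1, c_2 \in C$ were to realize the same color pair $(\gamma,\gamma')$ under $f_{\xi+1}$, then using $(Tf_\xi)(c) = f_\xi(T^{-1}c)$, their $T^{-1}$-images $(T^{-1}c_1, T^{-1}c_1')$ and $(T^{-1}c_2, T^{-1}c_2')$ would realize the same color pair under $f_\xi$. Since basic twists carry each cluster into itself and send coupled cells to coupled cells (both immediate from the geometric definition of twists), the preimages still lie in $C$ and remain coupled pairs, contradicting the inductive hypothesis. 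The corner case is identical. For a limit $\lambda$, any hypothetical pair of offending coupled pairs involves only four cells, whose colors must stabilize below $\lambda$ by legality of $f_\lambda$; hence a failure at stage $\lambda$ would already manifest at some sufficiently large $\eta < \lambda$, again contradicting the inductive hypothesis.

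The main obstacle I anticipate is the base case bookkeeping, where one has to make the cluster/face correspondence precise across generic edge clusters, the edge-cross cluster, and the corner cluster; each case requires a slightly different enumeration, though the underlying idea is the same. Once this is in hand the successor and limit steps are essentially formalities, reflecting respectively the fact that basic twists move cubies as units (preserving both coupling and cluster membership) and the fact that legal limits can only force color coincidences that were already present cofinally before the limit.
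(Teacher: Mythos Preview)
Your proposal is correct and follows essentially the same approach as the paper's proof: transfinite induction on the length of a witnessing sequence, with a direct check for $f_\solved$, preservation under basic twists at successors, and the ``finitely many cells must have stabilized earlier'' argument at limits. The paper is terser throughout---in particular, it waves at the base case in one sentence and calls the successor step ``trivial''---whereas you spell out the bijection between edge cells and directed edges of the cube and make explicit why $T^{-1}$ preserves both cluster membership and the coupling relation; this extra detail is welcome and does not diverge from the paper's strategy.
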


\begin{proof}
    Let $\seq{\sigma_\eta : \eta < \theta}$ be a basic sequence convergent over $f_\solved$ and let $\seq{f_\eta : \eta \le \theta}$ be the sequence of configurations obtained by applying $\seq{\sigma_\eta: \eta < \theta}$ to $f_\solved$. We argue by induction on $\theta$. Clearly, $f_0 = f_\solved$ has the desired property. (A color pair determines an edge, if there is one, the edge cluster $C$ then determines a unique pair of coupled cells in that edge, and any color triple matches at most one corner.) Individual basic twists preserve this property, so the successor $\theta$ case is trivial. So assume $\theta$ is a limit ordinal and that all $f_\eta$ for $\eta < \theta$ have the desired property. If there is more than one pair or triple of cells witnessing a violation of the desired property under $f_\theta$, then by legality, the color of each of those two or three cells must have stabilized before stage $\theta$, and thus, there was already a violation of the desired property at some earlier stage $\eta < \theta$, contrary to our hypothesis. 
\end{proof}

With this in mind, we can refine the proof of Lemma~\ref{lemma:UC=TF} to show that, for the edged cube, our various natural convergence notions coincide.

\begin{lemma}
    For the edged cube $\bar\Q_L$, the basic sequences convergent over the solved configuration, the universally convergent sequences, and the twist-finite sequences coincide.
    \label{lemma:edged_convergence}
\end{lemma}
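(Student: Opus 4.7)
The implication from universal convergence to convergence over $f_\solved$ is immediate, since $f_\solved$ is a specific configuration, and Lemma~\ref{lemma:UC=TF} already provides the equivalence of universal convergence and twist-finiteness. The only substantive direction is therefore that convergence over $f_\solved$ implies twist-finiteness, and my plan is to argue this by contrapositive, mimicking the calculation from Lemma~\ref{lemma:UC=TF} but extracting the contradiction from Lemma~\ref{lemma:color_coupling} rather than from injectivity of the identity labelling.

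So suppose $\vec\sigma = \seq{\sigma_\eta : \eta < \theta}$ is not twist-finite; some basic twist $T$ occurs at indices $\eta_\nu$ cofinally in $\theta$, and by truncating we may assume $\theta = \sup_\nu \eta_\nu$ is a limit. Let $\seq{f_\eta : \eta \le \theta}$ be obtained by applying $\vec\sigma$ to $f_\solved$, and suppose toward a contradiction that $f_\theta$ is legal. By persistence of illegality, every intermediate $f_\eta$ is then legal as well, so the limit calculation
\[
    f_\theta(Tc) = \lim_\nu f_{\eta_\nu + 1}(Tc) = \lim_\nu T f_{\eta_\nu}(Tc) = \lim_\nu f_{\eta_\nu}(c) = f_\theta(c)
\]
from the proof of Lemma~\ref{lemma:UC=TF} goes through verbatim for every cell $c$ moved by $T$. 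Iterating the same identity at $Tc, T^2 c, T^3 c$ shows that every $T$-orbit is \emph{monochromatic} in $f_\theta$.

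The contradiction then comes from pitting this monochromaticity against Lemma~\ref{lemma:color_coupling}, which I split into two cases according to the layer of $T$. Say $T = T_{i,\alpha}^k$ with $k \in \{1,2,3\}$. If $\alpha \ne \pm\infty$ (an interior twist), I focus on the four edge cubies that sit at position $\alpha$ along the $i$-axis; their eight cells fall into exactly two $T$-orbits (both of size four when $k \in \{1,3\}$, all of size two when $k=2$). The two cells of each such cubie lie in two distinct edge clusters $C$ and $C'$, and the monochromaticity of each orbit forces at least two of the four cubies to carry the same unordered color pair $\{\gamma,\gamma'\}$; this yields at least two distinct coupled pairs $(c,c')$ with $c \in C$, $f_\theta(c) = \gamma$, and $f_\theta(c') = \gamma'$, in flagrant violation of the edge half of Lemma~\ref{lemma:color_coupling}. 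If $\alpha = \pm\infty$ (a face twist), I instead examine the four corner cubies meeting that face; their twelve cells split into three $T$-orbits of size four (or six of size two when $k=2$), and the same monochromaticity now forces at least two of the four corners to share the same unordered color triple, contradicting the corner half of Lemma~\ref{lemma:color_coupling}.

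The only real delicacy is the orbit bookkeeping on edges and corners under $T^k$, in particular the half-turn case $k=2$ where orbits degenerate from size four to size two and one has to verify that one still gets at least two cubies with a shared color fingerprint. In every subcase one does, so the contrapositive goes through and all three notions of convergence coincide on $\bar\Q_L$.
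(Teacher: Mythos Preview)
Your proof is correct and matches the paper's approach: derive the limit identity $f_\theta(Tc)=f_\theta(c)$ from the unbounded recurrence of $T$, then produce duplicate color-coupled edge (interior twist) or corner (face twist) cubies contradicting Lemma~\ref{lemma:color_coupling}; the paper simply works with a single cell $c$ and its image $Tc$ rather than passing to full monochromatic orbits. One minor slip: for the center slice $\alpha=0$ the two cells of each affected edge cubie lie in the \emph{same} cluster $C(+\infty,0)$ rather than two distinct ones, but the contradiction still goes through since Lemma~\ref{lemma:color_coupling} never requires the partner cell $c'$ to lie outside $C$.
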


\begin{proof}
    Having already shown the universally convergent sequences are exactly the twist-finite sequences, it suffices to show that the twist-finite sequences are precisely those convergent over the solved configuration.

    Again, one direction is obvious: every twist-finite sequence is universally convergent (since each cell is affected only finitely many times), hence convergent over the solved configuration.

    Conversely, suppose $\seq{\sigma_\eta : \eta < \theta}$ a basic sequence convergent over $f_\solved$ and let $\seq{f_\eta : \eta \le \theta}$ be the corresponding sequence of configurations. Assume some basic twist $T$ occurs infinitely many times in $\seq{\sigma_\eta : \eta < \theta}$. Arguing as in the proof of Lemma~\ref{lemma:UC=TF}, we may assume the $\eta$ for which $\sigma_\eta = T$ are unbounded in $\theta$ and we find that $f_\theta(Tc) = f_\theta(c)$ for every cell $c$ affected non-trivially by $T$. 

    For concreteness, suppose $T = T_{x,\alpha}^k$ for some $\alpha \in \bar L^\dagger$ and $k = 1,2,3$, that is, $T$ is a twist of the $x = \alpha$ slice. First consider the case that $\alpha \ne \pm\infty$, so $T$ is a twist of one of the internal vertical slices, according to our usual conventions. Take $c$ to be any one of the eight edge cells affected by $T$, e.g., $c = (\alpha,+\infty,\underline{+\infty})$, the uppermost cell of the $\alpha$ column of the front face, and let $c'$ be its coupled edge cell on the adjacent face. Let $\gamma = f_\theta(c)$ and $\gamma' = f_\theta(c')$. Since $f_\theta$ is legal, $\gamma,\gamma' \ne \NaC$ and since $c,c'$ are coupled, $\gamma \ne \gamma'$.  We have deduced that $\gamma = f_\theta(c) = f_\theta(Tc)$ for the distinct cells $c,Tc$ in the cluster $C(c)$ and likewise for $c'$ and $\gamma'$. Thus, under $f_\theta$, there are two distinct cells in cluster $C(c)$ with color $\gamma$ whose coupled cells have color $\gamma'$. But there is at most one such cell in each (non-corner) edge cluster in any configuration obtained from $f_\solved$, by the previous lemma.

    This argument fails if $\alpha = \pm\infty$, i.e., if $T$ is a face twist, because the corner cells all belong to the same cluster and there may be two distinct pairs of coupled corner cells having the same pair of colors. But, in the case of corner cells, there is a third cell coupled to each pair and these two cells must have different colors but still be related by $T$. Let's make this precise. Let $c = (\alpha,+\infty,\underline{+\infty})$ be the one of the upper corner cells in the Front face affected by $T$. Let $c'$ be its coupled cell in the Up face and $c''$ be its coupled cell in the Left or Right face. Then $f_\theta(Tc) = f_\theta(c)$ and likewise for $c'$ and $c''$. But also $Tc$, $Tc'$, and $Tc''$ are mutually coupled corner cells distinct from $c,c',c''$ and so we've found two distinct triples of coupled corner cells with the same three colors, contrary to the previous lemma.

    The argument is analogous if $T$ is a $y$ or $z$ twist. Thus, there can be no such twist $T$ and the sequence is twist-finite.
\end{proof}

\begin{theorem}
    The basic sequences convergent over $f_\solved$ modulo $\sim$ form the group $\UC_L$ (of Lemma~\ref{lemma:UC_group}) under concatenation. In particular every convergent $\theta$-scramble is solvable in at most $\theta\cdot(k-1)$ many moves where $k = \lcm\{\abs\pi : \pi \in S_{24}\}$.
    \label{thm:edged_group}
\end{theorem}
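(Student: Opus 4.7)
The plan is to deduce the theorem essentially as a corollary of the two preceding lemmas, with only a small amount of additional bookkeeping. The first sentence of the theorem follows by direct combination: Lemma~\ref{lemma:edged_convergence} identifies, for the edged cube $\bar\Q_L$, the basic sequences convergent over $f_\solved$ with the elements of $\uc_L$; Lemma~\ref{lemma:UC_group} then gives that $\UC_L = \uc_L/{\sim}$ is a group. So the quotient of the sequences convergent over $f_\solved$ by $\sim$ is literally $\UC_L$, under concatenation.

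For the solvability bound, I would proceed as follows. Let $\vec\sigma$ be a convergent $\theta$-scramble. By Lemma~\ref{lemma:edged_convergence}, $\vec\sigma \in \uc_L$, so its class $[\vec\sigma] \in \UC_L$ is a group element whose order divides $k = \lcm\{\abs\pi : \pi \in S_{24}\}$ by Lemma~\ref{lemma:UC_group}. Hence $[\vec\sigma]^{k-1} = [\vec\sigma]^{-1}$ in $\UC_L$. Concretely, let $\vec\sigma^{k-1}$ denote the $(k-1)$-fold concatenation of $\vec\sigma$ with itself; by Corollary~\ref{cor:uc_monoid}, this is still in $\uc_L$, and $[\vec\sigma^{k-1}] = [\vec\sigma]^{k-1} = [\vec\sigma]^{-1}$. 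Thus $\vec\sigma^{k-1} \vec\sigma \sim \varepsilon$, so applying the basic sequence $\vec\sigma^{k-1}$ to the scrambled configuration $\vec\sigma f_\solved$ yields $f_\solved$. Since concatenation of sequences corresponds to ordinal addition of their lengths, the length of $\vec\sigma^{k-1}$ is $\theta \cdot (k-1)$, delivering the announced bound.

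There is no real obstacle to overcome here, since the substantive work was carried out in Section~\ref{sec:observations} and in Lemma~\ref{lemma:edged_convergence}. The only point that warrants a sentence of care is the passage from ``$[\vec\sigma^{k-1}]$ inverts $[\vec\sigma]$ in the group $\UC_L$'' to ``applying $\vec\sigma^{k-1}$ to $\vec\sigma f_\solved$ actually returns the solved configuration'', but this is immediate from the definition of the $\sim$-relation (applied to the particular configuration $f_\solved$) together with the fact that $\vec\sigma^{k-1}\vec\sigma$ is itself a basic sequence in $\uc_L$ equivalent to the empty sequence $\varepsilon$.
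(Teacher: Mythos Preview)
Your proof is correct and follows essentially the same approach as the paper's: invoke Lemma~\ref{lemma:edged_convergence} to identify convergence over $f_\solved$ with twist-finiteness, then use the order bound from Lemma~\ref{lemma:UC_group} to see that the $(k-1)$-fold concatenation $\vec\sigma^{k-1}$ (itself twist-finite) inverts $\vec\sigma$. Your write-up is in fact slightly more careful than the paper's in justifying the passage from the group-theoretic inverse to an actual solving sequence.
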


\begin{proof}
    By the previous theorem, every convergent scramble is a obtained by twist-finite sequence $\vec\sigma$, say of length $\theta$. The sequence $\vec\sigma^{k-1}$ is clearly itself twist-finite, hence universally convergent, and by Lemmas~\ref{lemma:UC=TF} and \ref{lemma:UC_group}, the sequence $\vec\sigma^k$ leaves the cube unchanged. Thus, the sequence $\vec\sigma^{k-1}$ of length $\theta\cdot(k-1)$ solves the scramble by $\vec\sigma$.
\end{proof}

This gives us a crude upper bound on the length of the shortest solve over all possible scrambles.

\begin{corollary}
    If $\card L = \aleph_\alpha$, then God's number for the edged cube $\bar\Q_L$, i.e., the supremum over all accessible configurations $f$ of the length of the shortest solve from $f$, is at most $\omega_{\alpha+1}$. 
    \label{cor:edged_GN}
\end{corollary}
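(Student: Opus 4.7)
The plan is to derive the bound directly from Theorem~\ref{thm:edged_group} by a cardinality counting argument. By Lemma~\ref{lemma:edged_convergence}, any configuration $f$ accessible from $f_\solved$ is of the form $\vec\sigma f_\solved$ for some twist-finite basic sequence $\vec\sigma$ of some ordinal length $\theta$, and Theorem~\ref{thm:edged_group} then supplies a solving sequence from $f$ of length $\theta\cdot(k-1)$, where $k = \lcm\{\abs{\pi} : \pi \in S_{24}\}$ is a fixed finite number. Hence it suffices to show that every twist-finite sequence has length $<\omega_{\alpha+1}$, since multiplication by the finite $k-1$ cannot push such an ordinal up to or past $\omega_{\alpha+1}$.

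For this, I would first count the basic twists: for each direction $i\in\{x,y,z\}$, each layer $\alpha \in \bar L^\dagger$, and each exponent $j\in\{1,2,3\}$, the twist $T_{i,\alpha}^j$ is a distinct basic twist, yielding $9\cdot\abs{\bar L^\dagger}$ in all. Since $\abs{\bar L^\dagger} = 2\aleph_\alpha + 3 = \aleph_\alpha$ under the hypothesis $\abs L = \aleph_\alpha$, there are exactly $\aleph_\alpha$ basic twists. A twist-finite sequence uses each one only finitely many times, so its length has cardinality at most $\aleph_\alpha \cdot \aleph_0 = \aleph_\alpha$; since $\omega_{\alpha+1}$ is the least ordinal of cardinality exceeding $\aleph_\alpha$, this forces $\theta<\omega_{\alpha+1}$, and therefore $\theta\cdot(k-1)<\omega_{\alpha+1}$ as well.

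There is no substantial obstacle here; the one subtlety worth flagging is that twist-finiteness controls the \emph{cardinality} of the sequence directly, whereas we need an \emph{ordinal} bound. The reconciliation is simply the standard fact that an ordinal lies below $\omega_{\alpha+1}$ exactly when its cardinality is at most $\aleph_\alpha$. All the real work is already absorbed into the preceding theorem and lemma.
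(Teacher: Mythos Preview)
Your proposal is correct and follows essentially the same approach as the paper: both derive the bound from Theorem~\ref{thm:edged_group} by observing that every twist-finite sequence has length $<\omega_{\alpha+1}$, and hence so does its $(k-1)$-fold iterate. The paper states this in a single sentence without details, whereas you spell out the cardinality count explicitly; your added detail is sound and fills in exactly what the paper leaves to the reader.
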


\begin{proof}
    This follows immediately from the previous theorem noting that every twist-finite basic sequence has length $< \omega_{\alpha+1}$. (Although there are trivial examples of twist-finite basic sequences having any desired length $< \omega_{\alpha+1}$.)
\end{proof}

It certainly is nice to know that every accessible legal configuration is in fact solvable and, moreover, that the convergent sequences here form a group. Nonetheless, this result is a far cry from anything resembling an \emph{algorithm} for solving a given configuration. Indeed, to implement the solution provided by the theorem, the solver needs to know the entire scramble, not merely the final scrambled configuration. This hardly counts as a ``solution'' as far as the Rubik's cube puzzle is concerned. Of course, we're being somewhat vague here, but there is an obvious sense in which this is result fails to provide a satisfactory solution method for the puzzle.

What's more, unlike the finite case, the mere existence of a sequence which will solve a given accessible configuration (together with our newfound understanding of the convergent sequences) is not enough for the solver to execute a ``brute force'' solution. Given any enumeration $s_\alpha$ of the twist finite sequences, the solver cannot apply (and subsequently undo using the inverse found above) all of them before this process itself fails to be twist-finite, hence diverges. So if the first solution only appears after that point, the solver will break the cube, so to speak, before finding it. Even if we imagine that the solver has tremendous intellectual capacity, capable of holding in her mind the entire infinite configuration of cardinality $\card L$ and simulating in her imagination the application of any given basic sequence (so that she may chose whether or not to execute any one of them on the actual cube she wishes to solve), there are $2^{\card L}$ many distinct twist-finite sequences (and just as many distinct accessible configurations). So there is a sense in which the ``brute force'' simulation requires vastly more resources than perhaps even our $\card L$-minded solver has access to. 

What one really wants, now that we know the puzzle is solvable in principle, is a uniform solution procedure that depends only on the given accessible configuration and which doesn't involve searching though up to $2^{\card L}$ many possible solutions. One might expect to be able to adapt algorithms for solving $\LLL N$ cubes but we know of no way of doing so in a twist-finite manner. 

However, it turns out that such an adaptation is possible in the countable edgeless case, where convergence over the solved configuration is a strictly weaker property than twist-finiteness.

\section{Efficient solution algorithm for the countable edgeless cube}
\label{sec:edgeless_algorithm}

The analogue of Lemma~\ref{lemma:edged_convergence} is not true in the edgeless case $\Q_L$. It's easy to produce basic sequences that are convergent over $f_\solved$ but not twist-finite, hence not universally convergent. In this sense, labels of cells are not generally preserved by basic sequences which converge over $f_\solved$ and so, at face value, it seems we don't get the decomposition into permutations of each cluster. But, thinking a little more carefully about the situation, one realizes that such a decomposition, albeit not necessarily a unique one, is possible anyway. The much more serious issue in the edgeless case is that finite iterates of a basic sequence convergent over $f_\solved$ need not converge over $f_\solved$ and so there is no obvious way to obtain a convergent sequence which achieves the inverse permutation $\big( \prod_C \pi_C \big)^{-1} = \big( \prod_C \pi_C \big)^{k-1}$, as we did previously.

\begin{observation}
    In the edgeless case $\Q_L$, the basic sequences convergent over $f_\solved$ are not closed under concatenation, and therefore do not form a submonoid of $\m_L$. In fact, there is such a sequence whose square is divergent over $f_\solved$.
\end{observation}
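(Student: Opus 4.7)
The plan is to construct an explicit basic sequence $s$ of order type $\omega+1$ which is convergent over $f_\solved$ but whose self-concatenation $s\concat s$ (of order type $\omega\cdot 2+1$) is divergent over $f_\solved$. Such an $s$ immediately yields both halves of the observation.

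The idea I would exploit is the following edgeless-specific phenomenon: a face twist such as $T_{z,+\infty}$ acts only on cells of the front face, and these are uniformly colored white in $f_\solved$. Consequently an $\omega$-block of $T_{z,+\infty}$'s applied to $f_\solved$ leaves every intermediate configuration equal to $f_\solved$ and so trivially converges. But the moment the front face carries even a single non-white cell, the same $\omega$-block cycles that cell's color around the face with period four and hence fails to stabilize.

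Concretely, I would fix any $a\in L$ and let $s$ consist of $\omega$ copies of $T_{z,+\infty}$ followed by a single $T_{x,a}$ (an internal vertical slice twist). First I would verify convergence of $s$ over $f_\solved$: the initial $\omega$-block of face twists leaves $f_\solved$ untouched, giving $f_\omega=f_\solved$, and the final twist then yields the legal configuration $f_{\omega+1}=T_{x,a}f_\solved$ in which exactly the cells $(a,y,+\infty)$ of the front face $a$-column have acquired the color blue (inherited from the up face via the rotation formula for $T_{x,a}$), while all other front face cells remain white. I would then show that $s\concat s$ diverges by tracking the single cell $c=(a,0,+\infty)$ through the second copy of $s$: using $T_{z,+\infty}:(x,y)\mapsto(-y,x)$, the color at $c$ after $k$ further iterations of $T_{z,+\infty}$ equals the color in $f_{\omega+1}$ at the preimage $T_{z,+\infty}^{-k}(a,0,+\infty)$, which cycles through the cells $(a,0,+\infty),(0,-a,+\infty),(-a,0,+\infty),(0,a,+\infty)$ with period four; since only the first of these lies in the $a$-column of $f_{\omega+1}$, the color at $c$ visits blue, white, white, white with period four and hence never stabilizes. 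So the configuration at stage $\omega\cdot 2$ assigns $\NaC$ to $c$, and by persistence of illegality the divergence propagates to the terminal stage $\omega\cdot 2+1$.

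The main subtlety is the \emph{order} of the two components of $s$: placing $T_{x,a}$ \emph{before} the $\omega$-block of face twists would already destroy convergence in the first pass, since the face twists would then act on a non-monochromatic front face from the start. Placing $T_{x,a}$ \emph{after} the $\omega$-block is the essential trick, so that the terminal configuration of $s$ differs non-trivially from $f_\solved$ without the first pass's face twists ever having felt the perturbation. After that, the remaining verification is routine coordinate bookkeeping.
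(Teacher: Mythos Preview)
Your proposal is correct and follows exactly the same construction as the paper: an $\omega$-block of front face twists followed by a single internal $x$-slice twist, with the same justification that the face twists act trivially on the monochromatic front face of $f_\solved$ but not after the slice twist has deposited a non-white column there. You supply considerably more detail than the paper (which dispatches both convergence and divergence with a single ``clearly''), carefully tracking the period-four orbit of a specific cross cell, but the underlying idea is identical.
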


\begin{proof}
    Let $s$ be the sequence which first performs $\omega$ many quarter turns of the Front face then does a quarter turn in some $x$ layer. Clearly, $s$ is convergent over $f_\solved$ (since the initial face twists act trivially on this configuration) but $s^2$ is not. 
\end{proof}

At first blush, one might think this observation should disqualify the notion of convergence over $f_\solved$ as the right notion in the edgeless case, and suggest that we allow only universally convergent sequences, having already seen the robustness of that convergence notion. However, we feel this is the wrong conclusion. Certainly, it would have been convenient if these sequences formed a group (modulo $\sim$), but as far as scrambles are concerned, the Solver only really cares about the final \emph{configuration}, not the particular scramble that produced it, so long as there is \emph{some} sequence by which she might unscramble it, and the above observation certainly does not rule out this possibility. 

There are well known algorithms for the $\LLL N$ cube to solve a single cluster while leaving the rest unaffected and moreover, these algorithms require $O(1)$ many moves as $N \to \infty$. Since there are $O(N^2)$ clusters, solving each cluster serially yields an $O(N^2)$ algorithm. By cleverly parallelizing the cluster solves as in \cite{Demaine2011}, this can be improved to $O(N^2/\log N)$ (which they show is in fact optimal). Surprisingly, this modest improvement in the finite case generalizes to as dramatic a distinction as possible. Namely, the straightforward generalization of the serial solution does not even converge, but we can adapt the parallelized solution to a solution in the countable edgeless case in a mere $\omega^2$ many moves. 

This section closely mirrors \cite[\textsection4]{Demaine2011} with Lemmas~\ref{lemma:cluster_solution} and \ref{lemma:cluster_product_soln}, in particular, being direct generalizations to the infinite case of their reasoning. Aside from differences in notation and coordinate systems, we use \emph{cell cluster} to mean something slightly different than their \emph{cubie cluster} but the discrepancy is only witnessed by edge and corner clusters, so the notions coincide here. We mention this discrepancy to avoid possible confusion, particularly for reader who wishes to consider the edged case.

Recall that each non-center cluster has a unique representative $(x,y,+\infty)$ with $(x,y) \in L\times (\{0\} \cup L)$, that is, in the upper right quadrant of the Front face. Throughout this section, we will denote that quadrant by $\D$ and identify points $(x,y,+\infty) \in \D$ with their projections $(x,y)$. The images of this quadrant under the action of $G_L$ partition the non-center cells into 24 parts, the four quadrants of each face. This partition gives a correspondence between the cells of non-center clusters $C \leftrightarrow B$. We'll say that a cluster configuration is simply the restriction $f \rest C : C \to \Gamma$ of a configuration to a given cluster and we'll say that two non-center clusters have the same configuration if corresponding cells have the same color.

An intuitively clear but non-trivial fact is the following.

\begin{observation}
    In every legal accessible configuration, there are exactly four cells of each (non-$\NaC$) color in every non-center cluster.
    \label{obs:four}
\end{observation}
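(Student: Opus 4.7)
The plan is to fix a non-center cluster $C$ and a color $\gamma \in \Gamma\setminus\{\NaC\}$, and to show by transfinite induction on the sequence length $\theta$ that the set $A_\eta = \{c \in C : f_\eta(c) = \gamma\}$ has cardinality exactly $4$ for every stage $\eta$ at which $f_\eta$ is a legal configuration reached by a basic sequence starting from $f_\solved$. Iterating over the six colors and the $24$ cells of $C$ then gives the observation.

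For the base case, $f_0 = f_\solved$ meets each non-center cluster $C$ in exactly four cells per face, namely the unique cluster-representatives in each of the four quadrants of that face. Since $f_\solved$ assigns a single color to each face, each color is realized on $C$ precisely four times. For the successor case, recall that each basic twist preserves every cluster setwise (as noted in the lead-up to Observation~\ref{obs:uc_bijective}). Thus $f_{\eta+1} = f_\eta\circ\sigma_\eta^{-1}$ with $\sigma_\eta^{-1}$ acting as a permutation of $C$, so the multiset $\{f_{\eta+1}(c) : c \in C\}$ equals $\{f_\eta(c) : c \in C\}$ and $|A_{\eta+1}| = |A_\eta| = 4$.

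The only step that requires any thought is the limit case. Assume $\theta$ is a limit ordinal, $f_\theta$ is legal, and $|A_\eta| = 4$ for all $\eta < \theta$. By legality, for each cell $c \in C$ the color $f_\eta(c)$ eventually stabilizes on $f_\theta(c)$, so there is some $\eta_c < \theta$ with $f_\eta(c) = f_\theta(c)$ for all $\eta \in [\eta_c,\theta)$. Because $C$ has only $24$ cells and $\theta$ is a limit ordinal, the supremum $\eta^\ast = \max_{c \in C}\eta_c$ is still below $\theta$. Then $A_\eta = A_\theta$ for every $\eta \in [\eta^\ast,\theta)$, and in particular $|A_\theta| = |A_{\eta^\ast}| = 4$.

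There is no real obstacle in this argument; the potential worry is the limit step, and it is defused entirely by the finiteness of $C$, which promotes pointwise stabilization of the color function to uniform stabilization on $C$. Note that the proof does not require the scramble to be universally convergent or twist-finite, only that the resulting configuration $f_\theta$ is legal and each $f_\eta$ en route is legal (which is automatic by the persistence of illegality established earlier).
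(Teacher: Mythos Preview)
Your proof is correct and follows essentially the same approach as the paper's: transfinite induction on the scramble length, with basic twists preserving color-counts on clusters and legality forcing stabilization on the finite cluster at limit stages. The only cosmetic difference is that you carry the exact count of four through the induction directly (via uniform stabilization of all $24$ cells), whereas the paper carries only the upper bound $\le 4$ through the induction and then recovers equality at the end by pigeonhole ($24$ cells, six colors, each appearing at most four times).
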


\begin{proof}
    As we've done many times, one argues by induction on the length of a sequences convergent over $f_\solved$ that each cluster has no more than four of each color. (Basic twists preserve this and if there were five of some color at some limit stage, there had to have already been five at some earlier stage.) But then since each non-center cluster has exactly 24 cells, and there are only six colors, there also cannot be fewer than four of each.
\end{proof}

A nice consequence of this is that in any accessible configuration of the edgeless cube, we may regard each non-center cluster configuration as an \emph{even} permutation of the solved configuration, i.e., for each cluster $C$ there is an even permutation $\pi_C \in A_C \cong A_{24}$ such that $f(c) = f_\solved(\pi_Cc)$ for all $c \in C$. The reason is that if $f = f_\solved\circ \pi$ on $C$ for some odd permutation $\pi \in S_C$, we can simply compose $\pi$ with a transposition that swaps two like-colored cells.\footnote{One can show that this holds also of the edge and corner cubies in the edged case but this situation is much more subtle because one needs to consider not just permutations of the cells but also simultaneously of the coupled cells. The coupled pairs/triples are unique so this transposing like-colors trick does not work. In this section we are only considering the edgeless cube anyway.}

\begin{lemma}
    For any given accessible configuration $f$ and any non-center cluster $C$, there is a finite basic sequence $s$ which solves $C$ and leaves all other clusters unchanged, i.e., if $g$ is the terminal configuration of $s|f$, then $g(c) = f_\solved(c)$ for each $c \in C$ and $g(c) = f(c)$ for all $c \not\in C$. Moreover, if $(\alpha,\beta,+\infty) \in \D$ for $\alpha > 0$ and $\beta \ge 0$ is the representative of $C$ in the Front Upper Right quadrant $\D$, then this sequence $s$ uses only face twists and slice twists in the $\pm\alpha$ and $\pm\beta$ layers.
    \label{lemma:cluster_solution}
\end{lemma}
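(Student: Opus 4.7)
The plan is to adapt the single-cluster solving algorithm of Demaine~et~al.~\cite{Demaine2011} for $\LLL N$ cubes to the infinite edgeless setting. Since we only need a \emph{finite} basic sequence acting on a fixed cluster of $24$ cells, the combinatorics is independent of the cardinality of $L$ and the finite-cube reasoning transfers with only cosmetic changes. The one point requiring attention is verifying that all moves produced by the algorithm lie in the allowed set, namely face twists and slice twists in the $\pm\alpha$ and $\pm\beta$ layers.

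First I would reduce the problem to producing arbitrary $3$-cycles inside $C$. By Observation~\ref{obs:four}, the restriction $f\rest C$ agrees with some permutation applied to the solved colouring on $C$; by post-composing with a transposition of two like-colored cells if necessary, we may regard this permutation as an element of the alternating group $A_{24}$ on $C$. Since $A_{24}$ is generated by $3$-cycles, it suffices to exhibit, for every ordered triple $c_1, c_2, c_3$ of distinct cells of $C$, a finite basic sequence of the prescribed form whose induced permutation is the $3$-cycle $(c_1\; c_2\; c_3)$ on $C$ and the identity on every other cluster. Concatenating at most $23$ such sequences then solves $C$.

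The base $3$-cycle is constructed via the classical ``big-cube centers'' commutator: pick a face twist $F$ (say $F = T_{z,+\infty}$) and a slice twist $S$ in one of the allowed layers (say $S = T_{y,\beta}$). A direct calculation shows that the commutator $[F, S] = F S F^{-1} S^{-1}$ fixes every cell outside $\operatorname{supp}(F) \cup S(\operatorname{supp}(F))$, which is a concrete finite list of cells on the Front, Left, Right, Up, and Down faces. Within this support the commutator acts as a product of short cycles touching only a handful of clusters. Then, by conjugating $[F, S]$ with a short \emph{setup} sequence of further face twists and $\pm\alpha, \pm\beta$ slice twists, one isolates a single $3$-cycle supported within the target cluster $C$. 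This is the standard setup-commutator-undo trick: the setup can freely scramble cells outside $C$ because the surrounding $[F,S]$ is undone and all such effects cancel. Varying the setup moves over the finite group generated by allowed moves then yields a $3$-cycle on every desired triple in $C$.

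The main obstacle is the routine but tedious case analysis for the three morphologically distinct cluster types: generic ($\alpha > \beta > 0$ with $\alpha \ne \beta$), diagonal ($\alpha = \beta$), and cross ($\beta = 0$, odd cube only). The generic case is handled by the commutator above. For diagonal clusters the $\pm\alpha$ and $\pm\beta$ layers coincide, so one instead uses commutators of the form $[F, T_{y,\alpha} T_{x,\alpha}^{\pm1}]$ exploiting the extra symmetry of diagonal cells. For cross clusters, $\beta = 0$ is the center layer and commutators involving $T_{y,0}$ (permitted since $\pm\beta = 0$) generate the required $3$-cycles. Each case reduces to the corresponding finite-cube sub-problem treated in \cite[\textsection4]{Demaine2011} and carries over to $\Q_L$ verbatim.
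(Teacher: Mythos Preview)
Your reduction to even permutations and hence to $3$-cycles is exactly right, and matches the paper. The gap is in the construction of the $3$-cycle itself.

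The commutator $[F,S]$ with $F$ a face twist and $S$ a slice twist does \emph{not} touch only a handful of clusters in the edgeless infinite cube. Writing $[F,S] = (FSF^{-1})S^{-1}$, its support lies in $\operatorname{supp}(S) \cup F(\operatorname{supp}(S))$, but $\operatorname{supp}(S)$ is an entire row in each of four faces---infinitely many cells in infinitely many clusters. Concretely, for $c = (x,\beta,+\infty)$ in the Front $y=\beta$ row one computes $[F,S](c) = F(c)$, so every cell of that infinite row is moved, and each such cell lies in a distinct cluster $C(|x|,\beta)$ as $x$ ranges over $L^\dagger$. (In the $\LLL N$ case this same commutator is a product of $\Theta(N)$ disjoint $3$-cycles, one per cluster in the row, not a single $3$-cycle.) Your next sentence, that conjugating by a setup ``isolates a single $3$-cycle,'' cannot save this: conjugation preserves cycle structure, so an infinite-support permutation stays infinite-support.

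The paper's fix is to take the \emph{inner} commutator between two perpendicular \emph{slice} twists, $X = [R_\alpha, F_\beta']$, which has genuinely finite support (each slice twist moves only one row per face, and the interaction is a single cell), and then form the outer commutator $[X, U_{+\infty}']$ with a face twist. The resulting ten-move sequence is verified exhaustively to be a $3$-cycle on $C(\beta,\alpha)$ fixing every other cell. Rotating the ambient frame gives $24$ such $3$-cycles, and a computer check shows they generate $A_{24}$; there is no attempt to realize \emph{every} $3$-cycle directly. Your case split into generic, diagonal, and cross clusters is also handled in the paper, but via separate verification tables for the same explicit sequence rather than by varying the commutator form.
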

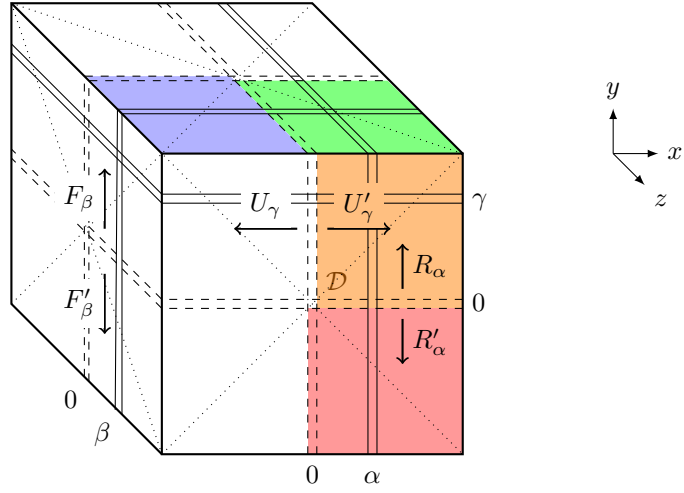
\begin{figure}[t]
    \begin{center}
        \begin{tikzpicture}[scale=2]
            \pgfmathsetmacro\r{.03}
            \pgfmathsetmacro\a{.4}
            \pgfmathsetmacro\b{.44}
            \pgfmathsetmacro\g{.7}
            \fill[orange!50] (1+\r,1-\r)  -- (1+\r,2) -- (2,2) -- (2,1-\r) -- cycle;
            \fill[red!40] (1-\r,1-\r) -- (2,1-\r) -- (2,0) -- (1-\r,0) -- cycle;
            \fill[green!50] (.5-\r/2,2.5-\r/2) -- (1.5,2.5-\r/2) -- (2,2) -- (1-\r,2) -- cycle;
            \fill[blue!30] (.5-3*\r/2,2.5+\r/2) -- (-.5,2.5+\r/2) -- (0,2) -- (1-\r,2) -- cycle;
            \draw[thick] (0,0)  -- (-1,1) -- (-1,3) -- (1,3) -- (2,2);
            \draw[thick] (-1,3) -- (0,2);
            \draw[thick] (0,0) -- (0,2) -- (2,2) -- (2,0) -- cycle;
            \node [above right, orange!50!black] at (1+\r,1+\r) {$\D$};

            \draw[dotted] (0,0) -- (2,2) -- (-1,3) -- cycle;
            \draw[dotted] (1,3) -- (0,2) -- (2,0);
            \draw[dotted] (-1,1) -- (0,2);

            \draw[dashed] (1+\r,0) -- (1+\r,2) -- (\r,3);
            \draw[dashed] (1-\r,0) -- (1-\r,2) -- (-\r,3);
            \draw (1+\a+\r,0) -- (1+\a+\r,2) -- (\a+\r,3);
            \draw (1+\a-\r,0) -- (1+\a-\r,2) -- (\a-\r,3);
            \node[below] at (1,0) {$\mathstrut0$};
            \node[below] at (1+\a,0) {$\mathstrut\alpha$};
            \draw[thick,->] (\a+1.2,1.1) -- (\a+1.2,1.4) node [midway,right] {$R_\alpha$};
            \draw[thick,->] (\a+1.2,.9) -- (\a+1.2,.6) node [midway,right] {$R_\alpha'$};

            \draw[dashed] (-.5-\r/2,.5+\r/2) -- (-.5-\r/2,2.5+\r/2) -- (1.5-\r/2,2.5+\r/2);
            \draw[dashed] (-.5+\r/2,.5-\r/2) -- (-.5+\r/2,2.5-\r/2) -- (1.5+\r/2,2.5-\r/2);
            \draw (-.5+\b/2-\r,.5-\b/2+\r/2) -- (-.5+\b/2-\r/2,2.5-\b/2+\r/2) -- (1.5+\b/2-\r/2,2.5-\b/2+\r/2);
            \draw (-.5+\b/2+\r/2,.5-\b/2-\r/2) -- (-.5+\b/2+\r/2,2.5-\b/2-\r/2) -- (1.5+\b/2+\r/2,2.5-\b/2-\r/2);
            \node[below left] at (-.5,.5) {$\mathstrut0$};
            \node[below left] at (-.5+\b/2,.5-\b/2) {$\mathstrut\beta$};

            \draw[dashed] (2,1+\r) -- (0,1+\r) -- (-1,2+\r);
            \draw[dashed] (2,1-\r) -- (0,1-\r) -- (-1,2-\r);
            \draw (2,1+\g+\r) -- (0,1+\g+\r) -- (-1,2+\g+\r);
            \draw (2,1+\g-\r) -- (0,1+\g-\r) -- (-1,2+\g-\r);
            \node[right] at (2,1) {$\mathstrut 0$};
            \node[right] at (2,1+\g) {$\mathstrut\gamma$};
            \draw[thick,->] (.9,1+\g-.2) -- +(-.42,0) node [above, midway, fill=white] {$\mathstrut U_\gamma$};
            \draw[thick,->] (1.1,1+\g-.2) -- +(.42,0) node [above, midway, fill=orange!50] {$\mathstrut \smash[b]{U_\gamma'}$};
            
            \draw[thick,->] (-.5+\b/2-.1, 1.5) -- +(0,.4) node [midway,left,fill=white] {$F_\beta$};
            \draw[thick,->] (-.5+\b/2-.1, 1.2) -- +(0,-.4) node [midway,left,fill=white] {$F_\beta'$};

            \draw[-latex] (3,2) -- +(.3,0) node [right] {$x$};
            \draw[-latex] (3,2) -- +(0,.3) node [above] {$y$};
            \draw[-latex] (3,2) -- +(.21,-.21) node [below right] {$z$};

        \end{tikzpicture}
    \end{center}
    \caption{Diagram indicating the alternate notations $R_\alpha = T_{x,\alpha}^{-1}$, $F_\beta = T_{z,\beta}^{-1}$, $U_\gamma = T_{y,\gamma}^{-1}$ for $\alpha,\beta,\gamma \ge 0$ and $R_\alpha', F_\beta', U_\gamma'$ are their respective inverses.}
    \label{fig:cube_diagram}
\end{figure}

\begin{proof}
    For readability, let's introduce the notations $R_\alpha,F_\beta,U_\gamma,R_\alpha',F_\beta',U_\gamma'$, akin to standard Rubik's cube notations, for twists in the right, upper, and front halves, as indicated in the diagram in Figure~\ref{fig:cube_diagram}. Precisely, for $\alpha,\beta,\gamma \ge 0$, we define $R_\alpha = T_{x,\alpha}^{-1}$, $F_\beta = T_{z,\beta}^{-1}$, and $U_\gamma = T_{y,\gamma}^{-1}$. One may think of these as clockwise quarter turns of the $\alpha,\beta,\gamma$ layer as seen from the Right, Front, or Upper face, respectively. $R_\alpha'$, $F_\beta'$, and $U_\gamma'$ are their respective inverses. 

    We claim that the sequence\footnote{To the author's knowledge, this sequence is well known in the speed cubing community. One secondary source attributed it to the speed cuber Ingo Sch\"utze but the author could not find a primary reference.}
    \begin{equation}
        \seq{R_\alpha, F_\beta', R_\alpha', F_\beta, U_{+\infty}', F_\beta', R_\alpha, F_\beta, R_\alpha', U_{+\infty}}
        \label{eq:3-cycle-seq}
    \end{equation}
    (read left-to-right) has the effect of rotating the cells of the $C(\beta,\alpha)$ cluster in the three quadrants highlighted in red, green, and blue in the diagram. That is, the sequence cycles $(\alpha,-\beta,+\infty)$, $(\alpha,+\infty,\beta)$, and $(-\beta,+\infty,\alpha)$ while leaving every other cell unchanged.

    To see that the sequence has the claimed effect, we break it down by cases, summarized in Table~\ref{tab:seq_effect} for $\alpha \ne \beta$. Table~\ref{tab:seq_effect_diag} gives the necessary modifications to Table~\ref{tab:seq_effect} for the case $\alpha = \beta$. Keep in mind that there are no edges, so the only cells affected by the Upper face twists $U_{+\infty}$ and $U_{+\infty}'$ are the cells in the Upper face.

    \begin{table}[t]
        \centering
        \makebox[\textwidth][c]{%
            \begin{tabular}{|l|l|l|l|}
                \hline
                face & cell & effective subsequence & reduced
                \\\hline\hline
                Down
                & $(x,-\infty,z)$; $x \ne \alpha,\,z\ne\beta$   & $\varepsilon$ & $\varepsilon$
                \\& $(\alpha,-\infty,z)$; $z \ne \beta$       & $R_\alpha,R_\alpha',R_\alpha,R_\alpha'$   & $\varepsilon$
                \\& $(x,-\infty,\beta)$; $x \ne \alpha$       & $F_\beta',F_\beta,F_\beta',F_\beta$ & $\varepsilon$
                \\& $(\alpha,-\infty,\beta)$       & $R_\alpha,R_\alpha',F_\beta,F_\beta',R_\alpha,R_\alpha'$ & $\varepsilon$
                \\\hline
                Left
                & $(-\infty,y,z)$; $z\ne\beta$   & $\varepsilon$ & $\varepsilon$
                \\& $(-\infty,y,\beta)$; $y \ne -\alpha$  & $F_\beta',F_\beta,F_\beta',F_\beta$ & $\varepsilon$
                \\& $(-\infty,-\alpha,\beta)$  & $F_\beta',R_\alpha',R_\alpha,F_\beta$ & $\varepsilon$
                \\\hline
                Right
                & $(+\infty,y,z)$; $z\ne\beta$   & $\varepsilon$ & $\varepsilon$
                \\& $(+\infty,y,\beta)$; $y \ne -\alpha$  & $F_\beta',F_\beta,F_\beta',F_\beta$ & $\varepsilon$
                \\& $(+\infty,-\alpha,\beta)$  & $F_\beta',R_\alpha',R_\alpha,F_\beta$ & $\varepsilon$
                \\\hline
                Back
                & $(x,y,-\infty)$; $x\ne\alpha$   & $\varepsilon$ & $\varepsilon$
                \\& $(\alpha,y,-\infty)$; $y\ne-\beta$   & $R_\alpha,R_\alpha',R_\alpha,R_\alpha'$ & $\varepsilon$
                \\& $(\alpha,-\beta,-\infty)$   & $R_\alpha,F_\beta',F_\beta,F_\beta',F_\beta,R_\alpha'$ & $\varepsilon$
                \\\hline
                Front
                & $(x,y,+\infty)$; $x\ne\alpha$   & $\varepsilon$ & $\varepsilon$
                \\& $(\alpha,y,+\infty)$; $y\ne-\beta$   & $R_\alpha,R_\alpha',R_\alpha,R_\alpha'$ & $\varepsilon$
                \\& $(\alpha,-\beta,+\infty)$  & $R_\alpha,F_\beta',F_\beta,U_{+\infty}',U_{+\infty}$ & $R_\alpha$
                \\\hline
                Up
                & $(x,+\infty,z)$; $x\ne\pm\alpha,\pm\beta$, $z\ne\alpha,\beta$   & $U_{+\infty}',U_{+\infty}$ & $\varepsilon$
                \\& $(x,+\infty,\alpha)$; $x\ne\pm\alpha,\pm\beta$ & $U_{+\infty}',R_\alpha,R_\alpha',U_{+\infty}$ & $\varepsilon$
                \\& $(x,+\infty,\beta)$; $x\ne\pm\alpha,\pm\beta$ & $F_\beta',F_\beta,U_{+\infty}',U_{+\infty}$ & $\varepsilon$
                \\& $(\alpha,+\infty,z)$; $z\ne\alpha,\beta$   & $R_\alpha,R_\alpha',U_{+\infty}',U_{+\infty}$ & $\varepsilon$
                \\& $(\alpha,+\infty,\alpha)$  & $R_\alpha,R_\alpha',U_{+\infty}',R_\alpha,R_\alpha',U_{+\infty}$ & $\varepsilon$
                \\& $(\alpha,+\infty,\beta)$  & $R_\alpha,R_\alpha',F_\beta,F_\beta',R_\alpha,R_\alpha',U_{+\infty}$ & $U_{+\infty}$
                \\& $(\beta,+\infty,z)$; $z \ne \alpha,\beta$ & $U_{+\infty}',U_{+\infty}$ & $\varepsilon$
                \\& $(\beta,+\infty,\alpha)$ & $U_{+\infty}',R_\alpha,R_\alpha',U_{+\infty}$ & $\varepsilon$
                \\& $(\beta,+\infty,\beta)$ & $F_\beta',F_\beta,U_{+\infty}',U_{+\infty}$ & $\varepsilon$
                \\& $(-\beta,+\infty,z)$; $z \ne \alpha,\beta$ & $U_{+\infty}',F_\beta',F_\beta,U_{+\infty}$ & $\varepsilon$
                \\& $(-\beta,+\infty,\alpha)$ & $U_{+\infty}',F_\beta',F_\beta,R_\alpha'$ & $U_{+\infty}',R_\alpha'$
                \\& $(-\beta,+\infty,\beta)$ & $F_\beta',F_\beta, U_{+\infty}',F_\beta',F_\beta,U_{+\infty}$ & $\varepsilon$
                \\& $(-\alpha,+\infty,z)$; $z \ne \alpha,\beta$ & $U_{+\infty}',U_{+\infty}$ & $\varepsilon$
                \\& $(-\alpha,+\infty,\alpha)$;  & $U_{+\infty}',R_\alpha,R_\alpha',U_{+\infty}$ & $\varepsilon$
                \\& $(-\alpha,+\infty,\beta)$;  & $F_\beta',F_\beta,U_{+\infty}',U_{+\infty}$ & $\varepsilon$
                \\\hline
            \end{tabular}%
        }
        \caption{Effect of the sequence $\seq{R_\alpha,F_\beta',R_\alpha',F_\beta,U_{+\infty}',F_\beta',R_\alpha',F_\beta,R_\alpha',U_{+\infty}}$ for $\alpha \ge0$ and $\beta> 0$ and $\alpha \ne \beta$ on each cell by cases. Here $\varepsilon$ denotes the empty sequence. Only the last three rows ($x = -\alpha$ in the Upper face) do not apply if $\alpha = 0$.}
        \label{tab:seq_effect}
    \end{table}
    \begin{table}[t]
        \centering
        \begin{tabular}{|l|l|l|l|}
            \hline
            face & cell & effective subsequence & reduced
            \\\hline\hline
            Front
            & $(x,y,+\infty)$; $x\ne\alpha$   & $\varepsilon$ & $\varepsilon$
            \\& $(\alpha,y,+\infty)$; $y\ne-\alpha$   & $R_\alpha,R_\alpha',R_\alpha,R_\alpha'$ & $\varepsilon$
            \\& $(\alpha,-\alpha,+\infty)$  & $R_\alpha,F_\beta',F_\beta,U_{+\infty}',R_\alpha,R_\alpha',U_{+\infty}$ & $R_\alpha$
            \\\hline
            Up
            & $(x,+\infty,z)$; $x\ne\pm\alpha$, $z\ne\alpha$   & $U_{+\infty}',U_{+\infty}$ & $\varepsilon$
            \\& $(x,+\infty,\alpha)$; $x\ne\pm\alpha$ & $F_\alpha',F_\alpha,U_{+\infty}',R_\alpha,R_\alpha',U_{+\infty}$ & $\varepsilon$
            \\& $(\alpha,+\infty,z)$; $z\ne\alpha$   & $R_\alpha,R_\alpha',U_{+\infty}',U_{+\infty}$ & $\varepsilon$
            \\& $(\alpha,+\infty,\alpha)$  & $R_\alpha,R_\alpha',F_\alpha,F_\alpha',R_\alpha,R_\alpha',U_{+\infty}$ & $U_{+\infty}$
            \\& $(-\alpha,+\infty,z)$; $z \ne \alpha$ & $U_{+\infty}',F_\alpha',F_\alpha,U_{+\infty}$ & $\varepsilon$
            \\& $(-\alpha,+\infty,\alpha)$;  & $F_\alpha',F_\alpha',U_{+\infty}',F_\alpha',F_\alpha,R_\alpha'$ & $U_{+\infty}',R_\alpha'$
            \\\hline
        \end{tabular}
        \caption{Effect of the sequence $\seq{R_\alpha,F_\alpha',R_\alpha',F_\alpha,U_{+\infty}',F_\alpha',R_\alpha',F_\alpha,R_\alpha',U_{+\infty}}$ for $\alpha > 0$ on each cell of the Front and Upper faces cases. Table~\ref{tab:seq_effect} applies verbatim (with $\beta = \alpha$) for the other four faces.}
        \label{tab:seq_effect_diag}
    \end{table}

    Now, one can preform the analogous sequence thinking of any of the six faces as the front and any of the four adjacent faces as the upper one. Thus, we obtain 24 distinct 3-cycles of the face quadrants so that for each such 3-cycle $\sigma$ and each (non-center) cluster $C(\beta,\alpha)$, we can permute the cluster by $\sigma$ in finitely many twists, using only $\pm\alpha,\pm\beta$ slice twists and face twists. 

    Fixing any enumeration $\D_1,\dots,\D_{24}$ of the face quadrants, we may think of these 3-cycles as permutations of $\{1,\dots,24\}$. One verifies (using the GAP software package \cite{GAP2024}) that these 24 3-cycles generate $A_{24}$. As noted earlier, we may assume that in any accessible legal configuration, each cluster configuration is an even permutation of the solved configuration, thus completing the proof.
\end{proof}

\begin{remark}
    The above lemma and its proof are essentially identical to \cite[Lemma~8]{Demaine2011}. However, in the author's opinion, Tables~\ref{tab:seq_effect} and \ref{tab:seq_effect_diag} provide a more convincingly thorough and easily verified (if unwieldy) account of the situation than the prosaic one offered in \cite{Demaine2011}. Moreover, Demaine et al.\ are somewhat unclear regarding cross and diagonal clusters (they can afford not to worry about these clusters anyway, whereas we have no such luxury).
\end{remark}

The parity Observation~\ref{obs:four} and the single-cluster solution method Lemma~\ref{lemma:cluster_solution} apply to the ordinary $\LLL N$ Rubik's cube and the proofs are actually a little easier in our case for the lack of edges and corners, which complicate matters slightly. 

Where the infinite case becomes problematic is that there does not seem to be a way to use the move sequences given by the previous lemma to solve each cluster in series without leading to a divergent configuration. For example, consider the configuration $f = T_{x,\alpha}f_\solved$ obtained by twisting only the $x = \alpha$ layer by a quarter-turn for some $\alpha \in L$. Suppose we ignore the obvious solution in the interest of being systematic. We need to solve all infinitely many clusters intersecting the $x=\alpha$ column of the Front face, that is each $C( (\alpha,y,+\infty) )$ for $y \in L^\dagger$. If we try to solve them one-by-one, then after attempting to solve $\omega$ many clusters (even if this is all of them), it seems like we may have changed some cell colors, in say the $z = \alpha$ layer unboundedly often, and so the sequence diverges at stage $\omega$. We emphasize that this problem occurs even if $L$ is countable. Although it might seem like one is making steady progress before stage $\omega$, approaching having solved every cluster, this is not enough. One must also ensure that in solving the later clusters, one does not infinitely often temporarily upset any cell one has already solved, or any other cell for that matter. 

We need to be more clever and the strategy is evident. Namely, we need to highly parallelize these cluster solution sequences in such a way that we obtain a convergent solve. We do not yet know how to do this in the uncountable case, but we have such a parallel method when $L$ is countable and we should like to explicate this now. Curiously, the method resembles some traditional Rubik's cube algorithms, proceeding from the cross clusters ``outward''. Indeed, one may have guessed that a solution might exist along these lines but hopefully by now, the reader appreciates the obstacles to na\"ively implementing such a procedure in a convergent manner.

We will need the machinery of cluster move sequences introduced by Demaine~et al.~in \cite{Demaine2011} and we shall reformulate the method to put it into our notation and, more importantly, to be sure it generalizes as claimed to the infinite case. Moreover, for their purposes they could afford to solve the cross and diagonal clusters in series whereas we cannot.

Let us introduce some terminology (some of which we have already be using informally). 
A \emph{face move} is a twist $T_{i,\pm\infty}^{\pm1}$, that is, a quarter- or reverse quarter-turn of any one face. They come in twelve \emph{types} (two directions for each of six faces) and, as a convenience, we define a thirteenth type which applies the identity. If $a$ is the type of face move, we write $F_a$ for the twist itself (not to be confused with the $F_\beta$ notation we used only in the proof of the previous lemma). A \emph{slice move} is any quarter- or reverse quarter turn basic twist $T_{i,\alpha}^{\pm1}$ for $\alpha \ne \pm\infty$. Given a index $0 \le \alpha < +\infty$, there are again twelve types of slice moves, three axes, two choices of sign ($\alpha$ and $-\alpha$), and two directions of rotation, and again, we define a thirteenth type which applies the identity. If $a$ is the type of slice move and $\alpha$ is its index, we write $S_{a,\alpha}$ for the twist itself. The moves $F_a$ and $S_{a,\alpha}$ for all types and indices are unique except that there are two types for each cross slice $S_{a,0}$ since $0 = -0$. 

A \emph{cluster move sequence} is a (finite) sequence of twists of the form 
\[
    \seq{S_{a_1,x},S_{b_1,y},F_{c_1},\dots,S_{a_k,x},S_{b_y,y},F_{c_k}}
\]
specified by a cluster representative $(x,y) \in \D$ and three (finite) type sequences: slice types $a_1,\dots,a_k$, slice types $b_1,\dots,b_k$, and face types $c_1,\dots,c_k$. A \emph{cluster move solution} for a cluster configuration $h$ is a cluster move sequence with the following properties:
\begin{enumerate}[label=(\Roman*)]
    \item For any $(x,y) \in \D$, if cluster $C(x,y)$ is in configuration $h$, then it can be solved (i.e., put in configuration $f_\solved \rest C(x,y)$) by applying the cluster move sequence $\seq{S_{a_1,x},S_{b_1,y},F_{c_1},\dots,S_{a_k,x},S_{b_k,y},F_{c_k}}$
    \item If $x \ne y$ and $y \ne 0$ (i.e., if $C(x,y)$ is not a diagonal or cross cluster), then the aforementioned cluster move sequence leaves $C(y,x)$ unchanged.
    \item All three of the following basic sequences leave all clusters unchanged:
        \begin{align*}
            & \seq{S_{a_1,x}, F_{c_1}, S_{a_2,x}, F_{c_2}\dots, S_{a_k,x}, F_{c_k}},
            \\& \seq{S_{b_1,y}, F_{c_1}, S_{b_2,y}, F_{c_2}\dots, S_{b_k,y}, F_{c_k}},
            \\& \seq{F_{c_1},F_{c_2}, \dots, F_{c_k}},
        \end{align*}
\end{enumerate}
When we say here that a sequence leaves a cluster unchanged, we mean only that it returns it to its original configuration, not that it does not temporarily disturb it. For convergence considerations, we will need to keep careful track of all disturbances.
Note that these various basic sequences are all (universally) convergent since they are finite. 

The sequence \eqref{eq:3-cycle-seq} and its rotated analogues are cluster move sequences (involving some identity face moves) and furthermore, the subsequences corresponding to property (III) above all act trivially. It follows immediately that the sequence provided by the proof of Lemma~\ref{lemma:cluster_solution} is in fact a cluster move solution for $C(\beta,\alpha)$. The key parallelization step is provided by the following lemma.

\begin{lemma}
    Suppose $X \subset L$ and $Y \subset \{0\}\times L$ (so that $X \times Y \subset \D$) with $X$ and $Y$ disjoint. If all clusters intersecting $X \times Y$ have the same configuration, then there is a twist-finite basic sequence of ordinal length $ \le (\card X + \card Y + 1)\cdot k$ for some finite $k$ only involving face moves and slice moves with index in $X \cup Y$ which when applied to the given configuration results in a configuration in which all clusters intersecting $X \times Y$ are solved and which agrees with the original configuration except possibly on clusters intersecting $(X\times Y) \cup (X\times X) \cup (Y \times Y)$. The final configuration also agrees with the original on the center cluster.
    \label{lemma:cluster_product_soln}
\end{lemma}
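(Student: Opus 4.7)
The plan is to parallelize the cluster move solution from Lemma~\ref{lemma:cluster_solution} across all indices in $X$ and $Y$ simultaneously. Since all clusters intersecting $X \times Y$ share a common configuration $h$, Lemma~\ref{lemma:cluster_solution} and the surrounding discussion furnish a cluster move solution $\seq{S_{a_1,\cdot}, S_{b_1,\cdot}, F_{c_1}, \dots, S_{a_k,\cdot}, S_{b_k,\cdot}, F_{c_k}}$ of some finite length $k$ whose slice types $a_i, b_i$ and face types $c_i$ depend only on $h$, not on the cluster index, and which satisfies the defining properties (I)--(III).

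I would then fix enumerations of $X$ and $Y$ in their respective order types and construct the parallelized sequence as follows: for each $i = 1, \dots, k$ in order, first apply $S_{a_i,x}$ for every $x \in X$ (in the fixed order), then apply $S_{b_i,y}$ for every $y \in Y$, and finally apply $F_{c_i}$. This sequence has ordinal length at most $(\card X + \card Y + 1)\cdot k$, uses only face moves and slice moves of index in $X \cup Y$, and each basic twist appears at most $k$ times. Hence it is twist-finite and, by Lemma~\ref{lemma:UC=TF}, universally convergent---in particular convergent over the given starting configuration.

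The verification of the effect on each cluster $C(x',y')$ proceeds by case analysis, exploiting that parallel twists commute and that within each round the moves actually affecting $C(x',y')$ appear in the ordered form $S_{a_i,\cdot}, S_{b_i,\cdot}, F_{c_i}$, matching one round of a cluster move solution. A slice $T_{j,z}$ acts non-trivially on $C(x',y')$ precisely when $z \in \{\pm x', \pm y', \pm\infty\}$, which for $z \in L$ means $z \in \{x',y'\}$. The cases for the effective subsequence on $C(x',y')$ are:
\begin{itemize}
\item if $(x',y') \in X \times Y$, it is the cluster move solution for $C(x',y')$ itself, solving the cluster by (I);
\item if $(x',y') \in Y \times X$ (necessarily non-diagonal since $X \cap Y = \varnothing$, and non-cross since $y' > 0$), it is the cluster move solution for $C(y',x')$, leaving $C(x',y')$ unchanged by (II);
\item if exactly one of $x', y'$ lies in $X \cup Y$, say $z$, with the other outside $X \cup Y$, it is a single-axis subsequence $\seq{S_{a_i,z}, F_{c_i}}_{i}$ or $\seq{S_{b_i,z}, F_{c_i}}_{i}$, trivial by (III);
\item if neither coordinate is in $X \cup Y$, it is $\seq{F_{c_i}}_i$, trivial by (III).
\end{itemize}
The remaining clusters $(x',y') \in (X\times X) \cup (Y\times Y)$ are the allowed exceptions. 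For the center cluster, face moves fix its cells and slice moves of nonzero index do not touch them; if $0 \in Y$, the effective subsequence is $\seq{S_{b_i,0}, F_{c_i}}_i$ restricted to the center, trivial by (III) applied to any cross cluster $(z,0) \in X \times Y$; otherwise no move in the sequence affects the center cluster at all.

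The main subtlety is reading off, from property~(III) as phrased for one cluster's own rep indices, triviality of the single-axis subsequence at an \emph{arbitrary} index $z \in X \cup Y$. This works because the types $a_i, b_i, c_i$ are common across all clusters of configuration $h$, so (III) applied to a cluster in $X \times Y$ whose rep coordinate equals $z$ yields exactly the triviality needed in $G_L$.
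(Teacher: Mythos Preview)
Your proof is correct and follows essentially the same route as the paper: build the parallelized sequence $s = s_1\concat\cdots\concat s_k$ with $s_i$ running all $a_i$-moves over $X$, then all $b_i$-moves over $Y$, then $F_{c_i}$, and verify the effect cluster-by-cluster via properties (I)--(III). Your case split by the membership of $(x',y')$ in $X\times Y$, $Y\times X$, the mixed single-coordinate cases, and neither-in-$X\cup Y$ matches the paper's, and your explicit treatment of the center cluster when $0\in Y$ is in fact a bit more careful than the paper's own account.
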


\begin{proof}
    This argument is essentially due to Demaine et al.~\cite{Demaine2011}. We reformulate it here to be explicit about how it generalizes to the infinite case.

    Let $h$ be the common configuration of all the clusters in $X\times Y$. By the previous lemma, these clusters admit cluster move solutions with common type sequences $a_1,\dots,a_k$, $b_1,\dots,b_k$, and $c_1,\dots,c_k$ (depending only on the configuration $h$).

    Let $s\concat t$ denote concatenation of (transfinite) sequences $s$ and $t$, read left-to-right (i.e., $s$ then $t$). Define for each $1\le i \le k$ the sequence
    \[
        s_i = \seq{S_{a_i,x} : x \in X}\concat \seq{S_{b_i,y} : y \in Y}\concat \seq{F_{c_i}}.
    \]
    Note that slice moves of the same type commute so we may take any well-order on $X$ and $Y$ in the definition of $s_i$. Being frugal, we might as well take $s_i$ to have length $\le \card X + \card Y + 1$ (interpreted in the ordinal sense). $s_i$ might be strictly shorter than $\card X + \card Y + 1$ if it involves identity ``moves''. Note also that $s_i$ is twist-finite. Consider the sequence 
    \[
        s = s_1\concat s_2 \concat \cdots \concat s_k.
    \]
    of length at most $(\card X + \card Y + 1)\cdot k$. Note that any cell in cluster $C(\alpha,\beta)$ can only possibly be affected non-trivially by face moves and by slice moves of index $\alpha$ or $\beta$ (that is, twists of some $\pm\alpha$ or $\pm\beta$ layer). Hence, for each $(x,y) \in X\times Y$, the subsequence of $s$ which affects it is
    \[
        \seq{S_{a_1,x},S_{b_1,y},F_{c_1},\dots,S_{a_k,x},S_{b_k,y},F_{c_k}}
    \]
    which, by construction, is precisely a cluster move solution for this cluster. (This is where we need the assumption that $X$ and $Y$ are disjoint, otherwise there might be interference between the $a_i$ and $b_j$ type moves.) This same subsequence will affect the cluster of $(y,x)$. If $y \ne 0$, then this sequence leaves that cluster unchanged by property (II) of cluster move solutions. (If $y = 0$ then $(x,0)$ and $(0,x)$ belong to the same cross cluster, which will be solved.) 

    The clusters intersecting $X\times X$ or $Y\times Y$ may or may not be affected, but the result we are after here puts no restriction on these clusters. It remains to consider the clusters whose $\D$ representative has at most one coordinate in $X\cup Y$. Suppose $(x,w) \in \D$ with $x \in X$ and $w \not\in X \cup Y$. Then the subsequence of $s$ which affects the clusters of $(x,w)$ and $(w,x)$ is $\seq{S_{a_1,x},F_{c_1},\dots,S_{a_k,x},F_{c_k}}$ which has no ultimate effect on any cluster by property (III) of move solutions. Similarly, if $(w,y) \in \D$ with $y \in Y$ and $w \not\in X \cup Y$, the subsequence of $s$ affecting the clusters of $(w,y)$ and $(y,w)$ is $\seq{S_{b_1,y},F_{c_1},\dots,S_{b_k,y},F_{c_k}}$ which does not ultimately affect any cluster again by property (III). Lastly, for every $(v,w) \in \D$ with $v,w \not\in X \cup Y$, the subsequence of $s$ affecting $(v,w)$ is $\seq{F_{c_1},\dots,F_{c_k}}$, which again has no ultimate effect on any cluster.
\end{proof}

\begin{remark}
    Thinking carefully about the arguments in this section, one realizes that the finite $k$ in the previous lemma is universal, it does not depend on the concerned clusters nor on their configuration.
\end{remark}

From here, our argument deviates from that of Demaine~et al. Their algorithm simply does not generalize to a convergent one. Everything we have established so far is completely general. Only now will we restrict our attention to the countable case $\card L = \aleph_0$. 

\begin{theorem}
    On the countable edgeless cube $\Q_L$ for $\card L = \aleph_0$, every accessible configuration can be solved and in at most $\omega^2$ many moves.
    \label{thm:edgeless_countable_solution}
\end{theorem}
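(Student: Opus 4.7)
The plan is to construct, for each accessible configuration, an explicit basic sequence of length at most $\omega^2$ solving the cube, using Lemma~\ref{lemma:cluster_product_soln} as the central tool. The $\omega^2$ bound is packaged via the ordinal identity $(\omega \cdot N) \cdot \omega = \omega^2$ (valid for any finite $N$): I'll arrange $\omega$ stages, each of ordinal length at most $\omega \cdot N$, where $N$ depends only on the universal finite $k$ from Lemma~\ref{lemma:cluster_product_soln} and on $\abs{A_{24}}$.

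Phase I solves the cross clusters. Group the crosses $\{C(x, 0) : x \in L\}$ by configuration type $h \in A_{24}$; for each of the finitely many types, apply Lemma~\ref{lemma:cluster_product_soln} with $X_h := \{x \in L : C(x, 0) \text{ has type } h\}$ and $Y = \{0\}$. The disjointness requirement is automatic since $0 \notin L$, and the disturbance region $(X_h \times \{0\}) \cup (X_h \times X_h) \cup \{(0, 0)\}$ meets the set of cross clusters $L \times \{0\}$ only in $X_h \times \{0\}$ itself (which is exactly what gets solved), so subsequent cross-type applications do not unseat already-solved crosses. Phase I's total length is a finite multiple of $\omega$, well under $\omega^2$.

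Phase II processes the non-cross clusters. Enumerate $L = \seq{\ell_n : n < \omega}$ and, at stage $n$, perform a parallel solve centered on the row $y = \ell_n$: for each configuration type $h$, apply Lemma~\ref{lemma:cluster_product_soln} with $X = \{x \in L \setminus \{\ell_n\} : C(x, \ell_n) \text{ currently has type } h\}$ and $Y = \{\ell_n\}$. Each stage has length $\le \omega \cdot k \cdot \abs{A_{24}}$, and the $\omega$-concatenation of stages has total length exactly $\omega^2$. The diagonal clusters $C(\ell_n, \ell_n)$ evade direct solution by the lemma's disjointness requirement; they are to be resolved as a byproduct of the disturbance region $Y \times Y = \{(\ell_n, \ell_n)\}$ of the row-$\ell_n$ solve, supplemented by carefully paired auxiliary applications chosen so that the cumulative disturbance on each diagonal realizes its solving permutation in $A_{24}$.

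The main obstacle, and the technical heart of the proof, is twofold. First, convergence: the $\omega^2$-length concatenation must be convergent over the initial accessible configuration, meaning every cell's color stabilizes at every limit ordinal. Because the disturbance region of the stage-$n$ solve includes $X \times X \subset L \times L$, a given cluster may, a priori, find itself in the disturbance region of infinitely many stages; so a careful choice of row order and of the configuration-type subcases is required, leveraging properties (II) and (III) of cluster move solutions — which guarantee trivial net effect on clusters outside the disturbance region per application — to ensure that each cell's color ultimately settles. Second, handling of diagonals: since neither Demaine et al.'s finite algorithm nor the naive serial adaptation converges (as the paper has already noted), a genuinely new accounting of the cumulative disturbances on each $C(\ell_n, \ell_n)$ is needed to show that the diagonals are indeed brought to solved state within the $\omega^2$ budget. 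This delicate bookkeeping is where the bulk of the proof's work will lie.
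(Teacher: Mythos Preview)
Your framework is right --- $\omega$ stages each invoking Lemma~\ref{lemma:cluster_product_soln} a bounded finite number of times --- and Phase~I is essentially what the paper does. But Phase~II as written will diverge, and the missing ingredient is not ``a careful choice of row order'' but a restriction on the index sets together with a second pass along columns.

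Concretely: the paper takes $L = \{1,2,3,\dots\}$, and at stage $n$ restricts to $X_h \subseteq \{n{+}1, n{+}2,\dots\}$ for the row solve $X_h \times \{n\}$, then symmetrically does a column solve $\{n\} \times Y_h$ with $Y_h \subseteq \{n{+}1, n{+}2, \dots\}$, and finally handles the lone diagonal $C(n,n)$ directly via Lemma~\ref{lemma:cluster_solution} (finitely many moves, using only $\pm n$ slice moves and face moves). Because every slice move at stage $n$ has index $\ge n$, the central $[-(n{-}1), n{-}1]^2$ square of each face --- already monochromatically solved by the end of stage $n{-}1$ --- is untouched by those slice moves and, being monochromatic, is unchanged in color by the face moves too. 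That growing invariant core is the entire convergence mechanism: every cell eventually lies inside it and its color is literally constant thereafter. The column pass is what lets you afford the restriction $X_h \subseteq \{n{+}1, n{+}2, \dots\}$ without missing the clusters $C(m,n)$ for $m < n$: those were handled back at stage $m$ as part of $\{m\} \times Y_h$.

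Your Phase~II takes $X \subseteq L \setminus \{\ell_n\}$ unrestricted and processes only rows. Then stage $n$ uses slice moves of layer $\pm a$ for every $a$ with $C(a,\ell_n)$ currently unsolved --- which, since row $\ell_n$ has never been processed before, is generically all of $L \setminus \{\ell_n\}$. A fixed cell $(a,b,+\infty)$ is therefore displaced during essentially every stage. You appeal to properties (II) and (III) for trivial \emph{net} effect per application on clusters outside the disturbance region, but that is already the full content of Lemma~\ref{lemma:cluster_product_soln} and gives nothing further here: the intermediate displacements within each stage are genuine color changes, and since they recur at every stage the color sequence at $(a,b)$ is not eventually constant, so the limit at $\omega^2$ is $\NaC$. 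No row ordering repairs this; the index sets themselves must shrink.

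The diagonals likewise need no ``byproduct'' accounting or auxiliary applications: Lemma~\ref{lemma:cluster_solution} solves $C(n,n)$ outright at the end of stage $n$, and those finitely many moves also respect the invariant core.
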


\begin{proof}
    Without loss of generality, we take $L = \{1,2,3,\dots\}$ to be the positive integers. It will be convenient to endow $L$ with the usual natural number order (so that $L^\dagger$ looks like $\Z$).

    Let $f_0$ be any accessible configuration of $\Q_L$. The strategy is as follows. We will apply a basic sequence of twists in $\omega$ many stages. Each stage will involve up to $\omega\cdot n + k$ many twists for some finite $n,k$, therefore resulting in $\omega^2$ many twists total. Before the final stage $\omega^2$ we will have only used each particular basic twist finitely many times so the only question of convergence is at the ultimate stage $\omega^2$. We'll argue that the color of each cell $c$ stabilizes on $f_\solved(c)$ before stage $\omega^2$. 

    As a preliminary step, in finitely many moves, we can solve the center cluster. (Alternatively, since the center cells cannot change there relative positions, we can simply define our coordinates so that the center cluster is already solved, which amounts to a global rotation.)

    Recall cluster configurations correspond to permutations in $S_{24}$ (via permutation of the face quadrants), in particular, there are only finitely many cluster configurations, at most $m = \card{S_{24}} = 24!$. (We can improve this to $24!/4!^6$ by considering permutations of the four like-colored cells of each color in each cluster, but it doesn't matter here.) A high-level overview of the first stages we are about to describe is represented in Figure~\ref{fig:schematic}. 

    In stage 0, we appeal to Lemma~\ref{lemma:cluster_product_soln} to solve all the cross clusters as follows. For each of the $m$ cluster configurations $h$, let $X_h \subset L$ be the subset of $L$ consisting of all (non-zero) $x$ for hich the cluster of $(x,0,+\infty)$ is in configuration $h$. Applying Lemma~\ref{lemma:cluster_product_soln} to $X_h \times \{0\}$, we solve all the cross clusters in configuration $h$ in $\le (\card{X_h} + 2) \cdot k \le \omega\cdot k + 2$ many moves. Doing this successively for each cluster configuration $h$, we solve all non-center cross clusters in $\le (\omega\cdot k + 2)\cdot m = \omega \cdot km + 2$ many moves. Moreover, this particular sequence returns the center cluster to the solved configuration.

    In stage 1, having solved the center and cross clusters, we move on to their neighboring clusters, that is, those intersecting the $\pm1$ layers. For each cluster configuration $h$, let $X_h \subset L\setminus\{1\}$ be the set of $x > 1$ for which the cluster of $(x,1,+\infty)$ is in configuration $h$. We solve all clusters intersecting $X_h \times \{1\}$ in $\le \omega\cdot k + 2$ many moves and looping over all cluster configurations $h$, we solve all $(L\setminus \{1\}) \times \{1\}$ clusters in $\le \omega\cdot km + 2$ many moves. (Note, we solved the $(0,1)$ cluster in the previous stage.) We can then do the same thing in the $x=1$ position, defining $Y_h$ analogously to $X_h$, etc. In another $\le \omega\cdot km +2$ many moves, we have solved all clusters in the $\pm1$ layers except possibly the diagonal cluster $C(1,1)$. We can now solve this cluster individually in finitely many moves by Lemma~\ref{lemma:cluster_solution}. So stage 1 requires in total $\le (\omega\cdot km + 2) + (\omega\cdot km+2 ) + \omega \le \omega\cdot (2km + 1)$ many moves.

    Let us highlight a few important points. Altogether, we have only used each particular basic twist finitely many times because we applied Lemma~\ref{lemma:cluster_product_soln} finitely many times ($\le m$ times in stage 0 and $\le 2m$ times in stage 1) and Lemma~\ref{lemma:cluster_solution} once, applying altogether only finitely many twist-finite sequences. At the end of stage 1, every cluster meeting the $0$ and $\pm1$ layers is in the solved configuration. Crucially to what follows, notice that stage 1 did not require any center layer slice moves $S_{a,0}$ and center cells are invariant under face twists. Thus, the color of the center cells never changed during stage 1. Keep this in mind as we think through the rest of the stages. Let's go through one more stage in detail to get a feel for what happens in general.

    In stage 2, we define $X_h \subset L \setminus \{1,2\}$ for each cluster configuration $h$ as the set of those $x > 2$ with the cluster of $(x,2,+\infty)$ in configuration $h$. By looping through the $m$ many sets of like-configured clusters $X_h \times \{2\}$, we solve all the $(L\setminus\{1,2\})\times \{2\}$ clusters in $\le \omega\cdot km + 2$ many moves. We have already solved $\{1\}\times \{2\}$. We then do the same for the $\{2\} \times (L\setminus \{1,2\})$ clusters, again having already solved the $\{2\}\times\{1\}$ cluster. Finally, we solve the diagonal $(2,2)$ cluster. Again, this stage uses each twist only finitely many times, uses $\le \omega\cdot (2km+1)$ many twists total, and at its conclusion, we the clusters in the $0,\pm1,\pm2$ layers are solved. Now, the key feature is that stage 2 does not require any slice moves of the $0$ or $\pm1$ layers and at the start of the stage, the cells in the $[-1,1]^2$ center of each face were already solved, hence invariant under face twists. So the cells in this center square of each face never change color during stage 2.

    And so on. In stage $n$, we solve all the (not yet solved) clusters in the $\pm n$ layers via a twist-finite sequence of length $\le \omega\cdot (2km+1)$ which does not include any slice moves of the $\pm \ell$ layers for $\ell < n$. Therefore, the already solved $[-(n-1),n-1]^2$ core of each face, which is invariant under face twists, is constant colored throughout stage $n$.

    Each cell $c = (x,y,z)$ is contained in the solved invariant core of its face forever after stage $n$ where $n$ is its maximum finite coordinate among $\pm x, \pm y, \pm z$, and its colors stabilizes on $f_\solved(c)$. Therefore, the sequence of configurations converges to $f_\solved$ at stage $\omega^2$, if not sooner.
\end{proof}

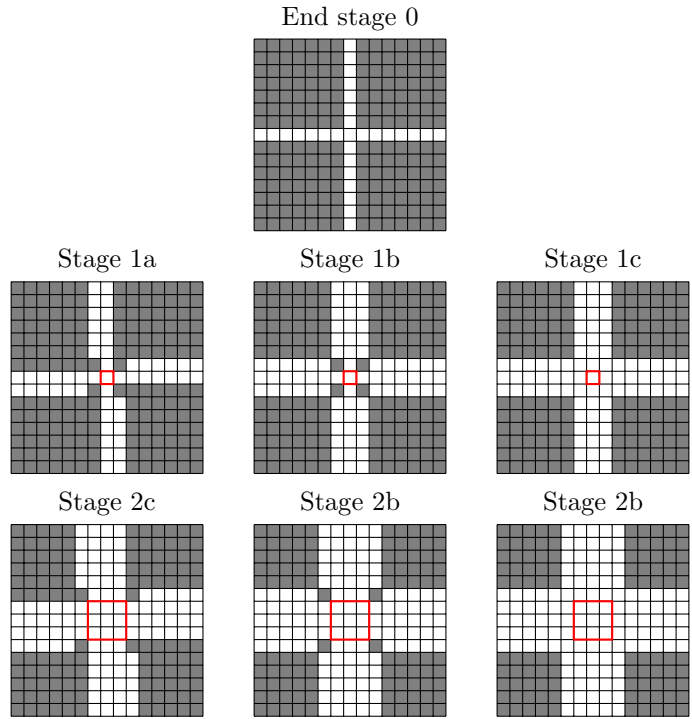
\begin{figure}[th]
    \centering
    \begin{tikzpicture}[scale=.17]
        \pgfmathsetmacro\r{7}
        \pgfmathsetmacro\xs{2*\r+5}
        \pgfmathsetmacro\ys{-2*\r-5}
        \begin{scope}[shift={(\xs,0)}]
            \fill[gray] (-\r,-\r) rectangle (\r+1,\r+1);
            \fill[white] (0,-\r) rectangle (1,\r+1);
            \fill[white] (-\r,0) rectangle (\r+1,1);
            \draw (-\r,-\r) grid (\r+1,\r+1);
            \node [above] at (.5,\r+1) {End stage 0};
        \end{scope}
        
        \begin{scope}[shift={(0,\ys)}]
            \fill[gray] (-\r,-\r) rectangle (\r+1,\r+1);
            \fill[white] (0,-\r) rectangle (1,\r+1);
            \fill[white] (-\r,0) rectangle (\r+1,1);
            \fill[white] (2,1) rectangle (\r+1,2);
            \fill[white] (-1,0) rectangle (-\r,-1);
            \fill[white] (-1,2) rectangle (0,\r+1);
            \fill[white] (1,-1) rectangle (2,-\r);
            \draw (-\r,-\r) grid (\r+1,\r+1);
            \draw[red, thick] (0,0) rectangle (1,1);
            \node [above] at (.5,\r+1) {Stage 1a};
        \end{scope}
        \begin{scope}[shift={(\xs,\ys)}]
            \fill[gray] (-\r,-\r) rectangle (\r+1,\r+1);
            \fill[white] (-1,-\r) rectangle (2,\r+1);
            \fill[white] (-\r,-1) rectangle (\r+1,2);
            \fill[gray] (1,1) rectangle (2,2);
            \fill[gray] (1,0) rectangle (2,-1);
            \fill[gray] (0,1) rectangle (-1,2);
            \fill[gray] (0,0) rectangle (-1,-1);
            \draw (-\r,-\r) grid (\r+1,\r+1);
            \draw[red, thick] (0,0) rectangle (1,1);
            \node [above] at (.5,\r+1) {Stage 1b};
        \end{scope}
        \begin{scope}[shift={(2*\xs,\ys)}]
            \fill[gray] (-\r,-\r) rectangle (\r+1,\r+1);
            \fill[white] (-1,-\r) rectangle (2,\r+1);
            \fill[white] (-\r,-1) rectangle (\r+1,2);
            \draw (-\r,-\r) grid (\r+1,\r+1);
            \draw[red, thick] (0,0) rectangle (1,1);
            \node [above] at (.5,\r+1) {Stage 1c};
        \end{scope}
        \begin{scope}[shift={(0,2*\ys)}]
            \fill[gray] (-\r,-\r) rectangle (\r+1,\r+1);
            \fill[white] (-1,-\r) rectangle (2,\r+1);
            \fill[white] (-\r,-1) rectangle (\r+1,2);
            \fill[white] (3,2) rectangle (\r+1,3);
            \fill[white] (-1,3) rectangle (-2,\r+1);
            \fill[white] (-2,-1) rectangle (-\r,-2);
            \fill[white] (2,-2) rectangle (3,-\r);
            \draw (-\r,-\r) grid (\r+1,\r+1);
            \draw[red, thick] (-1,-1) rectangle (2,2);
            \node [above] at (.5,\r+1) {Stage 2c};
        \end{scope}
        \begin{scope}[shift={(\xs,2*\ys)}]
            \fill[gray] (-\r,-\r) rectangle (\r+1,\r+1);
            \fill[white] (-2,-\r) rectangle (3,\r+1);
            \fill[white] (-\r,-2) rectangle (\r+1,3);
            \fill[gray] (2,2) rectangle (3,3);
            \fill[gray] (2,-1) rectangle (3,-2);
            \fill[gray] (-1,2) rectangle (-2,3);
            \fill[gray] (-1,-1) rectangle (-2,-2);
            \draw (-\r,-\r) grid (\r+1,\r+1);
            \draw[red, thick] (-1,-1) rectangle (2,2);
            \node [above] at (.5,\r+1) {Stage 2b};
        \end{scope}
        \begin{scope}[shift={(2*\xs,2*\ys)}]
            \fill[gray] (-\r,-\r) rectangle (\r+1,\r+1);
            \fill[white] (-2,-\r) rectangle (3,\r+1);
            \fill[white] (-\r,-2) rectangle (\r+1,3);
            \draw (-\r,-\r) grid (\r+1,\r+1);
            \draw[red, thick] (-1,-1) rectangle (2,2);
            \node [above] at (.5,\r+1) {Stage 2b};
        \end{scope}
    \end{tikzpicture}
    \caption{Schematic of the initial stages of the solve in the white face. In each case, the pattern is understood to continue in the obvious way. Gray cells represent arbitrary colors here. Each stage $n$a involves finitely many highly parallelized cluster solution subroutines for $X_h\times\{n\}$ clusters (applications of Lemma~\ref{lemma:cluster_product_soln}). Stage $n$b is analogous for $\{n\}\times Y_h$ clusters.  Stage $n$c solves the individual diagonal $(n,n)$ cluster. The red outlined box indicated the region invariant through stage $n$ (and forever after). The meat of the proof is in the existence of these twist-finite sequences which solve infinitely many clusters at once and doing so without ever upsetting the increasing core of cells at the center of each face.} 
    \label{fig:schematic}
\end{figure}

\begin{corollary}
    God's number for the countable edgeless cube $\Q_L$ with $\card L = \aleph_0$ is at most $\omega^2$.
    \label{cor:GN_edgeless}
\end{corollary}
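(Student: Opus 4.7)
The statement is an immediate packaging of Theorem~\ref{thm:edgeless_countable_solution}, so the ``proof'' is essentially bookkeeping. God's number, as defined in Corollary~\ref{cor:edged_GN}, is the supremum over all accessible configurations $f$ of the length of the shortest basic sequence convergent over $f$ whose terminal configuration is $f_\solved$. The plan is simply to observe that Theorem~\ref{thm:edgeless_countable_solution} supplies, for each accessible $f$, a specific basic sequence of length at most $\omega^2$ solving $f$; taking the supremum over $f$ gives the bound $\omega^2$.

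Concretely, I would write: let $f$ be any accessible configuration of $\Q_L$ with $\card L = \aleph_0$. By Theorem~\ref{thm:edgeless_countable_solution}, there is a basic sequence $s$ of ordinal length at most $\omega^2$ such that the terminal configuration of $s|f$ is $f_\solved$. Hence the length of the shortest solve from $f$ is at most $\omega^2$, and since this holds uniformly for every accessible $f$, the supremum is at most $\omega^2$ as well.

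There is no genuine obstacle here; all the work has been done in proving Theorem~\ref{thm:edgeless_countable_solution}. The only minor point worth flagging is that the bound $\omega^2$ in the theorem was obtained as $\omega$ stages of length at most $\omega\cdot(2km+1)$ each, and one should note that $\omega\cdot(2km+1)\cdot\omega = \omega^2$ in ordinal arithmetic, so the stated bound is indeed $\omega^2$ rather than something larger. No refinement of the theorem or additional argument is required.
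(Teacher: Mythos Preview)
Your proposal is correct and matches the paper's approach exactly: the corollary is stated immediately after Theorem~\ref{thm:edgeless_countable_solution} with no separate proof, since it is just the observation that the uniform $\omega^2$ bound on solve length passes to the supremum. Your added remark on the ordinal arithmetic is a harmless clarification but not something the paper bothers to spell out.
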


In our opinion, these results provide a very satisfactory formulation of and solution to the vague ``$\LLL \Z$'' Rubik's cube puzzle. Let us highlight a few notable features. This argument does not work for the (countable) edged cube $\bar\Q_L$. The sequence of twists produced by this method is \emph{almost}, but not quite, twist-finite. It uses each slice move $S_{a,\alpha}$ only finitely many times but generally uses $\omega$ many face moves (of which there are only finitely many distinct instances). So we know this sequence cannot converge on the edged cube but, in fact, it will converge pointwise to the solved configuration except on the edge and corner cells.

This argument also does not generalize in any obvious way to the uncountable edgeless case. One major obstacle is the Cartesian product structure imposed by Lemma~\ref{lemma:cluster_product_soln} on the clusters whose solution sequences we can parallelize. We circumvented this issue in the countable case by parallelizing along one row or column at a time, trivializing the product structure. Without a deeper understanding of the accessible configurations themselves in the edgeless case, it seems extremely challenging to make progress.

Still, we can push Theorem~\ref{thm:edgeless_countable_solution} a little further. Say a configuration $f$ of the edgeless cube $\Q_L$ is \emph{standard} if each non-center cluster has exactly four of each non-$\NaC$ color and the configuration of the center cluster (if it exists) is a global rotation of the solved center cluster configuration. Standardness is obviously a necessary condition for solvability, we now show that it is also sufficient, plus a little more.

\begin{lemma}
    For the countable edgeless cube, if $f_0$ standard configuration and $f$ is any standard configuration which, additionally, is invariant under all quarter-turn face twists, then $f$ is accessible from $f_0$ in at most $\omega^2$ many moves. 
    \label{lemma:std_access}
\end{lemma}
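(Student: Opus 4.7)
The plan is to adapt the staged algorithm in the proof of Theorem~\ref{thm:edgeless_countable_solution}, replacing each ``solve this cluster'' subroutine with ``steer this cluster into the configuration dictated by $f$.'' As a preliminary step, I would apply at most $\omega$ many twists to align $f_0$'s center cluster with that of $f$; since both are global rotations of the solved center by standardness, this is achievable (either by an $\omega$-length global rotation of the whole cube or by a short sequence of $0$-layer slice twists and face twists realizing the required cube rotation).

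First I would extend Lemma~\ref{lemma:cluster_solution} to arbitrary targets: given any two even permutations $\pi_g, \pi_h \in A_{24}$ representing the current and target configurations of a non-center cluster $C(\beta,\alpha)$, there is a finite basic sequence using only face twists and slice twists in the $\pm\alpha,\pm\beta$ layers that transforms $C$ from $\pi_g \cdot f_\solved\rest C$ to $\pi_h \cdot f_\solved\rest C$ while leaving every other cluster unchanged. This is immediate because $\pi_h\pi_g^{-1} \in A_{24}$ and the 24 three-cycles constructed in the proof of Lemma~\ref{lemma:cluster_solution} generate $A_{24}$; standardness of both $f_0$ and $f$ guarantees every cluster configuration is such an even permutation, by the same like-colored-transposition argument already given for accessible configurations. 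The parallelized version Lemma~\ref{lemma:cluster_product_soln} extends identically: if all clusters in $X \times Y \subset \D$ share a common current configuration $g$ and a common target configuration $h$, there is a twist-finite basic sequence of length $\le (\card{X} + \card{Y} + 1)\cdot k$ using only face moves and slice moves of index in $X\cup Y$ that simultaneously brings each of them from $g$ to $h$, for some universal finite $k$.

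Then I would execute the staged procedure exactly as in Theorem~\ref{thm:edgeless_countable_solution}: in each stage $n$, for each of the finitely many (current, target) pairs $(g,h) \in A_{24}^2$, partition the clusters in the $\pm n$ layers by that pair and process each subfamily in parallel via the extended Lemma~\ref{lemma:cluster_product_soln}, cleaning up the diagonal cluster $C(n,n)$ individually at the end of the stage. Each stage uses at most $\omega \cdot K$ many twists for some universal finite $K$, yielding $\omega^2$ many moves total.

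The main obstacle, and the whole reason the face-invariance hypothesis on $f$ is required, is the convergence analysis at the limit stage $\omega^2$. By the same structural analysis as in Theorem~\ref{thm:edgeless_countable_solution}, after stage $n$ the $[-n,n]^2$ core of each face is supposed to match $f$, and in every later stage $m > n$ the slice moves are confined to layers $\pm\ell$ with $\ell \ge m > n$, hence do not touch core cells. The only moves that can still affect a core cell after stage $n$ are face twists, and here the face-invariance of $f$ saves the argument: if $g$ matches $f$ on the (face-twist-invariant) core and $T$ is a face twist, then for each cell $c$ in the core,
\[
    (Tg)(c) = g(T^{-1}c) = f(T^{-1}c) = (Tf)(c) = f(c),
\]
so $Tg$ still matches $f$ on the core. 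Hence each cell's color stabilizes on $f(c)$ by the end of the stage indexed by its largest finite coordinate, and the terminal configuration at stage $\omega^2$ is $f$. Without the face-invariance hypothesis, the face twists accumulated across the infinitely many stages would make core cells oscillate indefinitely, so this hypothesis is indispensable rather than merely convenient.
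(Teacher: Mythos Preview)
Your proposal is correct and follows essentially the same approach as the paper: adapt the staged algorithm of Theorem~\ref{thm:edgeless_countable_solution} with $f$ as the target instead of $f_\solved$, using standardness to ensure every cluster transition is an even permutation, and invoking the face-twist invariance of $f$ to guarantee that the growing central core of each face remains stable through the infinitely many face moves, hence convergence at stage~$\omega^2$. The only cosmetic difference is that you partition clusters by (current, target) pairs rather than directly by the transition permutation $\pi_C$ as the paper does, but since both partitions are finite this changes nothing; your write-up is in fact more explicit about the center-cluster alignment and the convergence computation than the paper's own proof.
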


\begin{proof}
    It follows from Observation~\ref{obs:four} and the (easily verified) fact that convergent sequences cannot change the relative positions of the center cell colors that every legal configuration $f_0$ accessible from the solved configuration is standard. (Although we do not know if the converse is true.)

    Let $f$ be a standard configuration invariant under face twists. For each non-center cluster $C$, we may assume its configuration $f_0 \rest C$ is an even permutation of $f \rest C$, i.e., that there is some $\pi_C \in A_C \cong A_{24}$ such that $f(c) = f_0(\pi_C c)$ for each $c \in C$. Again, this is because if $\pi_C$ were odd, we could compose it with a transposition swapping two like-colored cells which exist since the configurations are standard. 

    Arguing exactly as in Theorem~\ref{thm:edgeless_countable_solution} with $f$ playing the role of $f_\solved$ as the target configuration, we can in $\omega^2$ many moves, in effect apply the permutation $\pi_C^{-1}$ to each cluster. Note, now we are parellalizing the cluster move sequences based on the permutations $\pi_C$ rather than the cluster configurations $h$, these notions were same when $f$ was $f_\solved$. The face-twist invariance of $f$ ensures convergence at stage $\omega^2$. 
\end{proof}

This result lets us paint a much richer picture of the space of legal configurations of $\Q_{\aleph_0}$ connected to $f_\solved$ by accessibility. We summarize what we know in Figure~\ref{fig:config_space} and make this precise in the following theorem.

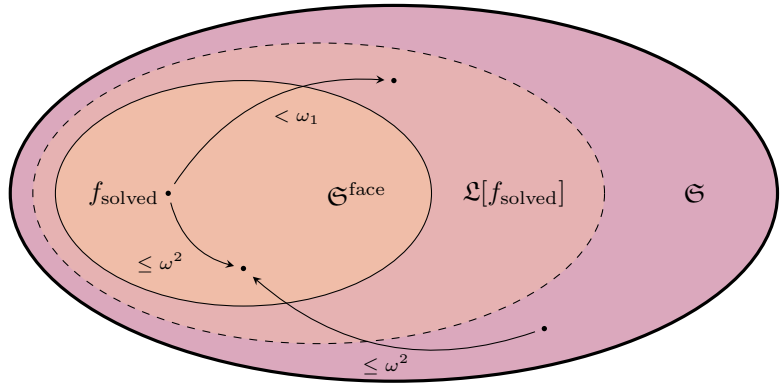
\begin{figure}[th]
    \centering
    \begin{tikzpicture}
        \node (solved) at (0,0) {};
        \node (sfti) at (1,-1) {};
        \node (la) at (3,1.5) {};
        \node (std) at (5,-1.8) {};
        \draw[fill=violet!70!orange!40, very thick] (3,0) ellipse (5.1 and 2.5);
        \node at (7,0) {$\S$};
        \draw[fill=violet!50!orange!40, dashed] (2,0) ellipse (3.8 and 2);
        \node  at (4.6,0) {$\Le[f_\solved]$};
        \draw[fill=violet!30!orange!40] (1,0) ellipse (2.5 and 1.5);
        \node at (2.5,0) {$\S^{\mathrm{face}}$};
        \fill[black] (solved) circle (1pt) node [left] {$f_\solved$};
        \fill[black] (sfti) circle (1pt);
        \fill[black] (la) circle (1pt);
        \fill[black] (std) circle (1pt);
        \path[draw,-stealth] (solved) to[bend right] node[midway, below left] {\scriptsize$\le\omega^2$} (sfti);
        \path[draw,-stealth] (solved) to[bend left] node[midway, below right] {\scriptsize$<\omega_1$} (la);
        \path[draw,-stealth] (std) to[bend left] node[midway, below] {\scriptsize$\le\omega^2$} (sfti);
    \end{tikzpicture}
    \caption{Space of all legal configurations of the countable edgeless cube $\Q_{\aleph_0}$ connected to $f_\solved$ by one- or two-way accessibility. The heavy border around the standard configurations is meant to suggest that there can be no other arrows in or out (except to illegal configurations). The dashed boundary between $\Le[f_\solved]$ and $\S$ is meant to suggest that this inclusion may or may not be proper.}
    \label{fig:config_space}
\end{figure}

\begin{theorem}
    For the countable edgeless cube $\Q_{\aleph_0}$ we have the following.
    \begin{enumerate}
        \item The inclusions
            \[
                \S^{\mathrm{face}} \subsetneq \Le[f_\solved] \subseteq \S
            \]
            hold where $\S$ is the set of standard configurations, $\S^{\mathrm{face}}$ is the set of standard configurations invariant under face twists, and $\Le[f_\solved]$ is the set of legal configurations accessible from $f_\solved$. 
        \item Every configuration in $\S^{\mathrm{face}}$ (including $f_\solved$) is accessible from every configuration in $\S$ in at most $\omega^2$ many moves. 
        \item No legal configuration outside $\S$ is connected by accessibility to $\S$ (in either direction).
    \end{enumerate}
    \label{thm:inclusions}
\end{theorem}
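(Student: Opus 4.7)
The plan is to dispatch parts (1) and (2) by invoking Lemma~\ref{lemma:std_access} and exhibiting one strictness witness, and then to prove (3) by a conservation-of-invariants argument.

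For part (1), the inclusion $\S^{\mathrm{face}} \subseteq \Le[f_\solved]$ is immediate from Lemma~\ref{lemma:std_access} applied with $f_0 = f_\solved$. Strictness is witnessed by $T_{x,1}f_\solved$: after this single twist, the $x=1$ column of the Up face inherits yellow from the Back face while the rest of the Up face remains blue, so a subsequent face twist $T_{y,+\infty}$ visibly rearranges that non-monochromatic pattern, proving the configuration is not face-twist invariant. For the inclusion $\Le[f_\solved] \subseteq \S$, Observation~\ref{obs:four} handles the four-cells-per-color condition on non-center clusters. For the center cluster, one checks that the only basic twists that can move center cells are the $T_{i,0}^k$ for $i \in \{x,y,z\}$ and $k \in \{1,2,3\}$, and each of these acts on the six face-centers as a rotation of the ambient cube about the $i$-axis; every composition therefore preserves ``global rotation of $f_\solved$'' on the center cluster. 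At limit stages, legality together with finiteness of the 6-cell center cluster forces the limit to stabilize pointwise, and on a 6-cell set every intermediate value of which was a global rotation, the pointwise limit must itself be a global rotation of $f_\solved$.

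Part (2) is then just the content of Lemma~\ref{lemma:std_access} applied to arbitrary $f_0 \in \S$ and $f \in \S^{\mathrm{face}}$, which is exactly the hypothesis of that lemma.

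For part (3), the strategy is to show that $\S$ is closed under two-way accessibility among legal configurations by exhibiting two invariants preserved by every basic twist and, more subtly, by convergent transfinite limits: (a) the color multiset of each non-center cluster, and (b) the property that the center cluster configuration is a global rotation of $f_\solved$. Invariance under individual twists is immediate because twists permute clusters setwise, and the previous paragraph's observation explains the center case. The crucial step is invariance at a limit $\lambda$ of a convergent sequence: legality ensures every cell has a color, and since each cluster is finite (24 cells, or 6 for the center), from any $\lambda$-cofinal sequence one extracts a subsequence along which the entire restricted cluster configuration is constant; the pointwise limit on that cluster must then coincide with this constant value, so both invariants pass to limits. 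Running this forward shows that a legal configuration accessible from $g_0 \in \S$ is itself in $\S$; the contrapositive version (non-$\S$ start yields non-$\S$ terminus at every stage) gives the backward direction. The main obstacle lies precisely in this limit-stage book-keeping, but the finite range of each invariant combined with the pointwise stabilization granted by legality makes the argument a routine application of the techniques already developed in Section~\ref{sec:observations}.
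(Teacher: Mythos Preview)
Your proof is correct and follows essentially the same route as the paper: both derive the inclusions in (1) and all of (2) directly from Lemma~\ref{lemma:std_access}, exhibit a single slice twist of $f_\solved$ for strictness, and obtain $\Le[f_\solved]\subseteq\S$ and (3) from Observation~\ref{obs:four} together with the rotation invariant on the center cluster. Your limit-stage argument for (3) via a constant cofinal subsequence on each finite cluster spells out what the paper compresses into ``a violation cannot first appear at a limit stage''; one minor quibble is that what you call the ``contrapositive'' is really the same invariance argument run from a non-standard starting multiset, not a logical contrapositive, but the mathematics is unaffected.
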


\begin{proof}
    The inclusion $\Le[f_\solved] \subseteq \S$ as well as property (3) are immediate conseqeunces of Observation~\ref{obs:four} and the fact that convergent sequences over any standard configuration preserve the relative positions of the colors of the center cluster. (This is an easy induction argument: a violation cannot first appear at a limit stage of a convergent sequence.)

    The claim that every $f \in \S^{\mathrm{face}}$ is accessible from every $f_0 \in \S$ in $\le\omega^2$ many moves is precisely the content of Lemma~\ref{lemma:std_access}. The special case $f_0 = f_\solved$ shows that $\S^{\mathrm{face}} \subseteq \Le[f_\solved]$. That this inclusion is proper is obvious, for example, applying any one slice twist to $f_\solved$ yields a configuration not invariant under face twists.
\end{proof}

\begin{remark}
    The above theorem is really two results masquerading as one. Namely, it applies to both the \emph{even} and \emph{odd} countable edgeless cubes, but standardness is a weaker condition for the even cube because there is no center cluster. In the even case, $\S$ includes all the legal configurations with exactly four of each color in each cluster, including, for instance, all the variations of the solved configuration with the the colors permuted. On the other hand, in the odd case, the legal configurations where each non-center cluster contains four cells of each color and the center cluster contains one of each (but in no partiuclar configuration) are partitioned into $6!/24 = 30$ components each analogous to Figure~\ref{fig:config_space} but, at its nucleus, a non-standard ``solved'' configuration with a different center cluster instead of $f_\solved$. Refinements of this picture may further distinguish the even and odd cases. For example, it is conceivable that the inclusion $\Le[f_\solved] \subseteq \S$ is proper in one case but not the other.
\end{remark}

Before concluding this section, we wish to comment on the possibility that the inclusion $\Le[f_{\solved}] \subseteq \S$ is proper. Firstly, the assertion that $\Le[f_\solved] = \S$ is simply the claim that every standard configuration is accessible from the solved configuration. We view the truth of this claim as a very natural question. Curiously though, in light of Theorem~\ref{thm:inclusions}, it is equivalently the assertion that every solvable configuration is accessible, an unexpected converse to our original motiving question: Is every accessible configuration solvable?

Suppose there is a configuration $f \in \S\setminus \Le[f_\solved]$, i.e., a standard configuration not accessible from the solved configuration. What can we say about $f$? It must have infinitely many clusters whose configurations are not invariant under some face twist. Otherwise, we could solve all the other clusters, then solve the finitely many aysymtrical ones at the end. However, this condition does suffice for inaccessiblilty. In fact, the diagonal clusters can be put into any standard configuration whatsoever. Let's see this.

Consider any diagonal cluster $C$. Mark the like-colored cells of the solved configuration so as to distinguish them. Then every marked configuration of $C$ corresponds to a unique permutation in $S_C \cong S_{24}$. If we don't worry about affecting other clusters, then we can check easilty taht every permutation of $C$ is possible by the usual slice and face twists. The slice moves alone generate a proper subgroup $\Sigma < S_C$ of the permuations. (This subgroup is proper only for the diagonal clusters, curiously.) Let $\Sigma \rho_1, \dots, \Sigma\rho_r$ enumerate the right cosets of $\Sigma$. Let $h_\alpha$ be any desired standard cluster configuration for each diagonal cluster $C(\alpha,\alpha)$. Say $h_\alpha$ is of type $j$ if $h_\alpha = \sigma\rho_jf_\solved$ for some $\sigma \in \Sigma$. For each $1 \le j \le r$, the permutation $\rho_j$ can be acheived by some cluster move sequence say with type sequences $a_1,\dots,a_k$, $b_1,\dots,b_k$, and $c_1,\dots,c_k$. 
We may prepare all the diagonal clusters we wish to put in a configuration of type $j$, say having indices $\alpha \in I_j$, as follows. Let
\[
    s_{j,i} = \seq{S_{a_i,\alpha} : \alpha \in I_j} \concat \seq{S_{b_i,\alpha} : \alpha \in I_j} \concat \seq{F_{c_i}}
\]
and
\[
    s_j = s_{j,1}\concat \cdots \concat s_{j,k}.
\]
Notice that this sequence is twist-finite and it affects the $\alpha$ diagonal cluster for each $\alpha \in I_j$ only by the subsequence $\seq{S_{a_1,\alpha}, S_{b_1,\alpha}, F_{c_1},\dots, S_{a_k,\alpha}, S_{b_k,\alpha}, F_{c_k}}$, which by construction applies the permutation $\rho_j$ to this cluster. After having done this, we can put each cluster $C(\alpha,\alpha)$ in the desired configuration $h_\alpha$ using no face twists and only slice moves of the $\pm\alpha$ layers since the necessary permutation is an element of $\Sigma$ (which is generated by the slice moves). Doing this for every $\alpha$ (in any order) is clearly twist-finite, hence convergent. 

Thus, for any desired standard configurations of the diagonal clusters, there is a legal accessible configuration acheiving exactly those cluster configurations. Again, the point is that it does not suffice for membership in $\Le[f_\solved]$ to be standard and face-twist invariant except possibly in finitely many clusters. The argument we jsut gave is not special to the diagonal clusters either, one can do the same thing more generally for any set of clusters whose $\D$ representtives intersect each row and column of $\D$ at most once.

Moreover, if there is any inaccessible standard $f$, it must have the following property. For every $g \in \S^{\mathrm{face}}$, if $\Delta \subseteq \D$ is the set of all $(x,y) \in \D$ such that $f$ and $g$ disagree on the cluster of $(x,y)$, then both projections $X,Y$ of $\Delta$, i.e., the minimal sets for which $\Delta \subseteq X \times Y$, must be infinite. This is a slightly stronger property than what we claimed above. Namely, the set of clusters in which $f$ is not face-twist invariant (identified with there represntatives in $\D$) is not merely infinite, but infinite in both coordinates. 

For those inspired to think further about the problem of whether $\Le[f_\solved] = \S$. We offer some candidate standard configurations which we think might be inaccessible, although we do not yet know how to show this. There is a well-known family of patterns for $\LLL N$ cubes called the \emph{superflip}. There are at least two distinct standard configurations of the odd cube $\Q_{\aleph_0}$ which are reasonable generalizations the superflip. The first, call it the $\omega$-superflip, can be described by ordering $L$ like $\omega$, as we did above. Then in each face, the $N\times N$ center square for each odd $N$ looks just like the coressponding face of the $\LLL N$ superflip. The other, call it the $\omega^*$-superflip, can be described by ordering $L$ like $\omega^*$, the reverse order of $\omega$. (Remember, this ordering is not intrinsic to the structure of $\Q_L$, it's merely an aide in describing the configuration. In particular, $\Q_L$ is still edgeless in the sense that each face twist only affects the cells of that face.) The diagonal clusters are as in the first superflip configurations but now the alternating patten propogates from the diagonals towards the cross, so to speak. Meanwhile, the cross also alternates colors (in the extrinsic $\omega^*$ order) with the outermost cell having the color of the center. These are genuinely distinct configurations, not a common configuration with different extrinsic orderings of $L$. One way to see this is the following. By our convention, the Front and Upper faces are white and blue, respectively. In both superflip configurations, the diagonals of the Front face are white and the upper triangle of the Front face is checkered with blue cells. In the $\omega$-superflip, each row of upper half of the Front face has only finitely many blue cells whereas in the $\omega^*$-superflip, each such row has only finitely many non-blue cells. It's easy to see that the standard algorithm for obtaining the $\LLL N$ superflip from the solved configuration generalized to $\Q_L$ using either extrinsic ordering (and accounting for the global rotations appropriately) simply does not converge over $f_\solved$. Of course, this is not a proof that there isn't some other way to obtain these configurations. A notable feature of these configurations is that an entire checkerboard of cell clusters in each face are not invariant under twists of that face and, similarly, in each slice, an alternating pattern of clusters are not invariant under twists of that slice.

\section{Open questions}
\label{sec:questions}
We conclude by collecting some of the natural questions prompted by this exploration.
\begin{enumerate}
    \item Is every convergent scramble of the uncountable edgeless cube $\Q_L$ for $\card L > \aleph_0$ solvable, in principle?
    \item In the countable edgeless case, $\Q_{\aleph_0}$, is the inclusion $\Le[f_\solved] \subseteq \S$ proper? That is, are there inaccessible standard configurations? And if not, can we simply characterize the accessible ones?
    \item Is there a simple characterization in the edgeless case of convergence over the solved configuration or of the accessible configurations and their algebraic structure, as there was for the edged case?
    \item In the edged case, since a basic sequence is convergent if and only if it is twist-finite, there is no need for half-turn twists, they can all be replaced by two quarter-turn twists without affecting convergence. Is the same true in the edgeless case or are there configurations accessible only using both half-turn and quarter-turn twists?
    \item Is there a uniform method to solve the edged cube that does not require knowledge of the scramble (suitably understood) or a sense in which that is impossible?
    \item Is the result of Corollary~\ref{cor:GN_edgeless} optimal? Clearly the optimal bound is at least $\omega$, but is there a method which works uniformly in $\omega \cdot n$ many moves or less for some $n$ for all accessible configurations? Perhaps every accessible configuration admits a solution in $\omega\cdot n$ for some $n$ but their supremum over all configurations in nonetheless $\omega^2$? Or is there an accessible configuration which requires $\omega^2$ many moves to solve?
\end{enumerate}

Note for several of these questions, what we have in mind is really a reduction in logical complexity. Generally, it seems unproblematic to quantify over cells or basic twists, but in many cases, the notions which arise naturally, such as the equivalence $\sim$ or the solvability/accessibility of a configuration, involve quantifying over configurations/labellings (functions on $\Q_L$) or sequences of twists. In these cases, we gain insight by eliminating these quantifiers. Indeed, some of the key of the results in this paper amount to such reductions. For example, the equivalence relation $\sim$ on the monoid $\m_L$ of sequences was defined in terms of a universal quantifier over configurations but Lemma~\ref{lemma:equiv_id} shows that, for universally convergent sequences, we can instead consider just the identity labelling, thus eliminating that quantifier. Lemma~\ref{lemma:std_access} shows that a configuration of $\Q_{\aleph_0}$ is solvable, a condition involving the existence of a basic sequence with a certain property, just in case it's standard, thus eliminating that existential quantifier.

\section{Bonus: coding well-orders into accessible configurations}
\label{sec:coding_orders}

The author considered the following curiosity as an initial stab at showing optimality of Corollary~\ref{cor:edged_GN}. Such a conjecture seems possibly misguided in light of Corollary~\ref{cor:GN_edgeless} but the following observation is neat anyway, so we include it here.

\begin{observation}
    Every well-ordering of $L$ can be coded into an accessible configuration of $\bar\Q_L$ or $\Q_L$. That is, there is an injection from the ordinals $[\card L, \card L^+)$ into the accessible configurations.
\end{observation}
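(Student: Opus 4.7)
The plan is a cardinality argument. I will produce at least $2^{\card L}$ distinct accessible configurations; by Cantor's theorem $2^{\card L} > \card L$, so $2^{\card L} \ge \card L^+ = \aleph_{\alpha+1}$, and any enumeration of the $\aleph_{\alpha+1}$ ordinals in $[\card L, \card L^+)$ then yields the required injection into the accessible configurations.

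For each function $f : L \to \{0,1,2,3\}$, let $s_f$ be the basic sequence consisting of the single basic twist $T_{x,\alpha}^{f(\alpha)}$ for each $\alpha \in L$ with $f(\alpha) > 0$, listed according to any fixed well-ordering of $L$. Each basic twist occurs at most once in $s_f$, so $s_f$ is twist-finite and therefore universally convergent by Lemma~\ref{lemma:UC=TF}. The twists $T_{x,\alpha}$ for distinct $\alpha \in L$ are parallel and pairwise commute, so the terminal configuration $g_f$ depends only on $f$ and has the $x=\alpha$ slice rotated by $f(\alpha)$ quarter-turns from $f_\solved$, with all other cells fixed. Since $\alpha \in L$ excludes $\pm\infty$, no face twists appear in $s_f$; in particular, in the edged case $\bar\Q_L$ the edge and corner cubies are untouched, and the argument applies uniformly to both $\Q_L$ and $\bar\Q_L$.

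Distinct $f, f'$ yield distinct terminal configurations: at any $\alpha$ where $f(\alpha) \ne f'(\alpha)$, the witness cell $(\alpha, +\infty, 0)$ on the Up face of the $x=\alpha$ slice cycles through the four distinct colors blue, yellow, green, white under successive applications of $T_{x,\alpha}$ to $f_\solved$, so $g_f$ and $g_{f'}$ disagree there. Thus we have an injection of the $4^{\card L} = 2^{\card L}$ functions $f$ into the accessible configurations, completing the cardinality count.

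I anticipate no genuine obstacle; the construction is explicit and the only real verification is the color-cycling check on the witness cell, which is routine. The ``coding'' language in the first sentence of the observation can be realized on top of this by attaching to each well-ordering $\prec$ of $L$ a canonical $f_\prec$ reflecting some feature of $\prec$ (for instance, $f_\prec(\alpha)$ depending on the $\prec$-rank of $\alpha$), but this overlay is decorative; the bare injection of order types asserted in the second sentence already follows from the cardinality.
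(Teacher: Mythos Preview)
Your cardinality argument correctly establishes the injection in the second sentence, but the paper takes a genuinely different route that also delivers the first sentence. The paper produces an explicit, \emph{decodable} encoding: given a well-order $\preceq$ on $L$ with enumeration $\seq{\alpha_\xi}$, apply $T_{x,\alpha_\xi}$ then $T_{y,\alpha_\xi}$ in $\preceq$-order; in the resulting configuration one reads $\alpha \prec \beta$ directly from the color of the cell $(\alpha,\beta,+\infty)$ in the upper-right quadrant of the Front face (orange iff $\alpha \prec \beta$, blue otherwise). Your approach is shorter and more elementary---exhibit $2^{\card L}$ configurations via independent $x$-slice twists, then invoke $2^{\card L} \ge \card L^+$---but the resulting injection is non-constructive, and your suggested overlay via $\prec$-rank is not actually injective on well-orders (a map $L \to \{0,1,2,3\}$ cannot faithfully record an arbitrary well-order), so as you concede it does not genuinely code anything. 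The distinction matters for the paper's intended application: the observation is offered as a step toward optimality of the God's-number bound of Corollary~\ref{cor:edged_GN}, where one hopes a configuration encoding an order of type $\theta$ might \emph{require} roughly $\theta$ moves to solve, and for that one needs the order to be recoverable from the configuration, not merely a cardinality match.
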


\begin{proof}
    Let $\preceq$ be a well-order of $L$ of order type $\theta \in [\card L, \card L^+)$ and $\alpha_0,\alpha_1,\dots,\alpha_\xi,\dots$ for $\xi < \theta$ the corresponding enumeration. We code $\preceq$ into an accessible configuration of $\bar\Q_L$ as follows. Recall, in our usual coordinate conventions, the Front, Left, and Upper faces have colors $\white$, $\orange$, and $\blue$, respectively in the solved configuration. Now consider the twist-finite (hence universally convergent) basic sequence which applies $T_{x,+\alpha_\xi}$ then $T_{y,+\alpha_\xi}$ for each $\alpha_\xi$ in order of the enumeration. What effect has this sequence had on the Front face? An example of an intermediate configuration with three each of the affected rows and column visible is shown in Figure~\ref{fig:WO_encoding}. In the final configuration $f$, every cell in the lower left quadrant is left unchanged with color $\white$. Every cell in the upper left has color $\orange$ and, similarly, every cell in the lower right is $\blue$. The interesting part is the upper right. The diagonal is entirely $\orange$ (since each $y$ twist followed the matching $x$ twist), but the dual cells $(x,y)=(\alpha,\beta)$ and $(x,y)=(\beta,\alpha)$ are always one $\orange$ and the other $\blue$. The important point is that $\alpha \prec \beta$ if and only if $f(\alpha,\beta) = \orange$ (and $f(\beta,\gamma) = \blue$). To see that this is the case, consider the stage at which we are about to perform the $\alpha_\xi$ twists and have already performed the $\alpha_\eta$ twists for all $\eta < \xi$. At this point, both the $x = +\alpha_\xi$ layer of the Upper face and the $y = \alpha_\xi$ layer of the Left face are pristine, none of our earlier twists have affected these layers. So $T_{x,+\alpha_\xi}$ twists an entire column of $\blue$ cells into the $x = +\alpha_\xi$ layer of the Front face, in particular, turning $(x,y) = (\alpha_\xi,\alpha_\eta)$ blue for every earlier $\eta < \xi$. Then, the subsequent $T_{y,+\alpha_\xi}$ turns the whole $y = +\alpha_\xi$ row of the Front face orange, in particular, all the $(x,y) = (\alpha_\eta,\alpha_\xi)$. None of those particular cells are affected by any of the later twists in the sequence. 
    \begin{figure}[t]
        \centering
        \begin{tikzpicture}[scale=.2]
            \pgfmathsetmacro\r{9}
            \pgfmathsetmacro\a{3}
            \pgfmathsetmacro\b{8}
            \pgfmathsetmacro\g{6}
            \fill[blue] (\a,-\r) rectangle (\a+1,\r+1);
            \fill[orange] (-\r,\a) rectangle (\r+1,\a+1);
            \fill[blue] (\b,-\r) rectangle (\b+1,\r+1);
            \fill[orange] (-\r,\b) rectangle (\r+1,\b+1);
            \fill[blue] (\g,-\r) rectangle (\g+1,\r+1);
            \fill[orange] (-\r,\g) rectangle (\r+1,\g+1);
            \draw (-\r,-\r) grid (\r+1,\r+1);
            \node [below] at (.5,-\r) {$\mathstrut 0$};
            \node [below] at (\a+.5,-\r) {$\mathstrut \alpha$};
            \node [left] at (-\r,\a+.5) {$\mathstrut \alpha$};
            \node [below] at (\b+.5,-\r) {$\mathstrut \beta$};
            \node [left] at (-\r,\b+.5) {$\mathstrut \beta$};
            \node [below] at (\g+.5,-\r) {$\mathstrut \gamma$};
            \node [left] at (-\r,\g+.5) {$\mathstrut \gamma$};
            \node [left] at (-\r,.5) {$0$};
            \foreach \x/\y in {3/8,8/6,6/3} {
                \draw [thick, green] (\x,\y) rectangle (\x+1,\y+1);
                \draw [thick, green] (\y,\x) rectangle (\y+1,\x+1);
            }
            \draw [<->,thick] (.5,\r+2) node[above] {$+y$} -- (.5,.5) -- (\r+2,.5) node[right] {$+x$};
            \draw [dashed] (-\r,-\r) -- (\r+1,\r+1);
        \end{tikzpicture}%
        \begin{tikzpicture}[scale=.2]
            \pgfmathsetmacro\r{9}
            \foreach \a in {3,8,6,1,2,7,5,9,4} {
                \fill[blue] (\a,-\r) rectangle (\a+1,\r+1);
                \fill[orange] (-\r,\a) rectangle (\r+1,\a+1);
            }
            \draw (-\r,-\r) grid (\r+1,\r+1);
            \node [below] at (.5,-\r) {$\mathstrut 0$};
            \node [left] at (-\r,.5) {$0$};
            \draw [<->,thick] (.5,\r+2) node[above] {$+y$} -- (.5,.5) -- (\r+2,.5) node[right] {$+x$};
            \foreach \x/\y in {3/8,8/6,6/3} {
                \draw [thick, green] (\x,\y) rectangle (\x+1,\y+1);
                \draw [thick, green] (\y,\x) rectangle (\y+1,\x+1);
            }
            \draw [dashed] (-\r,-\r) -- (\r+1,\r+1);
        \end{tikzpicture}%
        \caption{Configuration encoding a well-order $\preceq$ of $L$. On the left, the configuration after only $T_{x,\alpha},T_{y,\alpha},T_{x,\beta},T_{y,\beta},T_{x,\gamma},T_{y,\gamma}$ (because $\alpha \prec \beta \prec \gamma$) with the cross cells highlighted. On the right, (a piece of) the final configuration with the same cells highlighted to illustrate that they are unaffected by later twists in the sequence.}
        \label{fig:WO_encoding}
    \end{figure}
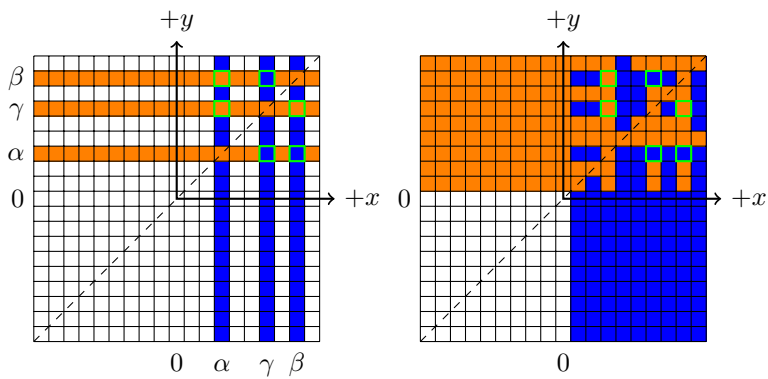
    Thus, this configuration codes the well-order $\preceq$. 
\end{proof}

\bibliography{references}
\bibliographystyle{plain}

\end{document}